\theoremstyle{plain}
\newtheorem{theorem}{Theorem}[section]
\newtheorem{remark}{Remark}[section]
\newtheorem{lemma}{Lemma}[section]
\newtheorem{example}{Example}[section]
\numberwithin{equation}{section}
\def\d{{\rm d}}
\def\al{\alpha}
\def\vth{\vartheta}
\def\rh{\varrho}
\def\luh{{\bar u}_h}
\def\K{\tau}
\def\zK{z^\K}
\def\Dal{{\partial^\alpha_t}}
\def\bDelh{{\bar{\Delta}_h}}
\def\dH#1{\dot H^{#1}(\Omega)}
\def\bPtau{\bar\partial_\tau}
\def\tu{\widetilde{u}}
\def\DDR#1#2{\icount=#1
  \ifnum\icount<1
 _{0}^{ \kern -.1em R} \kern -.2em \partial^{#2}_{\kern -.1em t}
  \else
 _{t}^{ \kern -.1em R} \kern -.2em \partial^{#2}_{\kern -.1em T}
  \fi
}
\def\Hdi#1#2{\icount=#1
  \ifnum\icount<1
  \widetilde H_{L}^{#2}\D
  \else
  \widetilde H_{R}^{#2}\D
  \fi
}
\begin{document}
\title[]{Numerical methods for time-fractional evolution equations with nonsmooth data: a concise overview}
\author[Bangti Jin]{Bangti Jin}
\address{Department of Computer Science, University College London, Gower Street, London, WC1E 2BT, UK.}
\email {b.jin@ucl.ac.uk,bangti.jin@gmail.com}

\author[Raytcho Lazarov]{$\,\,$Raytcho Lazarov$\,$}
\address{Department of Mathematics, Texas A\&M University, College Station, TX 77843, USA}
\email {lazarov@math.tamu.edu}

\author[Zhi Zhou]{$\,\,$Zhi Zhou$\,$}
\address{Department of Applied Mathematics,
The Hong Kong Polytechnic University, Hung Hom, Kowloon, Hong Kong, P.R. China}
\email {zhizhou@polyu.edu.hk}

\keywords{time-fractional evolution, subdiffusion, nonsmooth solution, finite element method, time-stepping, initial correction, error estimates, space-time formulation}

\date{\today}

\begin{abstract}
Over the past few decades, there has been substantial interest in evolution equations
that involving a fractional-order derivative of order $\alpha\in(0,1)$ in time, due to their
 many successful applications in engineering, physics, biology and finance. Thus,
it is of paramount importance to develop and to analyze efficient and
accurate numerical methods for reliably simulating such models, and the literature on
the topic is vast and fast growing. The present paper gives a concise overview on numerical
schemes for the subdiffusion model with nonsmooth problem data, which are important
for the numerical analysis of many problems arising in optimal control, inverse problems
and stochastic analysis. We focus on the following
aspects of the subdiffusion model: regularity theory, Galerkin finite element
discretization in space, time-stepping schemes (including convolution quadrature and L1 type
schemes), and space-time variational formulations, and compare the results with that for
standard parabolic problems. Further, these aspects are showcased with illustrative
numerical experiments and complemented with perspectives and pointers to relevant literature.
\end{abstract}

\maketitle

\section{Introduction}\label{sec:intro}
Diffusion is one of the most prominent transport mechanisms found in nature. The classical diffusion model
$\partial_t u-\Delta u=f$, which employs a first-order derivative $\partial_t u$ in time and the Laplace
operator $\Delta u$ in space, rests on the assumption that the particle motion is Brownian. One of the
distinct features of Brownian motion is a linear growth of the mean squared particle displacement with the
time $t$. Over the last few decades, a long list of experimental studies indicates that the Brownian motion
assumption may not be adequate for accurately describing some physical processes, and the mean squared
displacement can grow either sublinearly or superlinearly with time $t$, which are known as subdiffusion and
superdiffusion, respectively, in the literature. These experimental studies cover an extremely broad and
diverse range of important practical applications in engineering, physics, biology and finance, including electron
transport in Xerox photocopier \cite{ScherMontroll:1975}, visco-elastic materials \cite{Caputo:1967,
GinoaCerbelliRoman:1992}, thermal diffusion in fractal domains
\cite{Nigmatulin:1986}, column experiments \cite{HatanoHatano:1998} and protein transport in cell membrane
\cite{Kou:2008} etc. The underlying stochastic process for subdiffusion and superdiffusion is usually given
by continuous time random walk and L\'{e}vy process, respectively, and the corresponding macroscopic model for
the probability density function of the particle appearing at certain time instance $t$ and location $x$ is
given by a diffusion model with a fractional-order derivative in time and in space, respectively. We refer interested readers to the excellent
surveys \cite{MetzlerJeon:2014,MetzlerKlafter:2000} for an extensive list of practical applications and
physical modeling in engineering, physic, and biology and.

The present work surveys rigorous numerical methods for subdiffusion.
The prototypical mathematical model for subdiffusion is as follows. Let $\Omega\subset\mathbb{R}^d $
($d= 1,2,3$) be a convex polygonal domain with a boundary $\partial\Omega$, and consider the following
fractional-order parabolic problem for the function $u(x,t)$:
\begin{align}\label{eqn:pde}
\left\{\begin{aligned}
\Dal u(x,t) -\Delta u(x,t)  &= f(x,t) &&(x,t)\in\Omega\times(0,T], \\
u(x,t)&=0  &&(x,t)\in \partial\Omega\times(0,T], \\
u(x,0)&=v(x) &&x\in\Omega,
\end{aligned}
\right.
\end{align}
where $T>0$ is a fixed final time, $f \in L^\infty(0,T;L^2(\Omega))$ and $v\in L^2(\Omega)$ are given
source term and initial data, respectively, and $\Delta$ is the Laplace operator in space. Here
$\Dal u(t)$ denotes the Caputo fractional derivative in time $t$ of order $\alpha\in(0,1)$ \cite[p. 70]{KilbasSrivastavaTrujillo:2006}
\begin{align}\label{eqn:RLderive}
   \Dal u(t)= \frac{1}{\Gamma(1-\alpha)}\int_0^t(t-s)^{-\alpha} u'(s)\d s,
\end{align}
where $\Gamma(z)$ is the Gamma function defined by
\begin{equation*}
  \Gamma(z) = \int_0^\infty s^{z-1}e^{-s}\d s,\quad \Re (z)>0.
\end{equation*}
It is named after geophysicist Michele Caputo \cite{Caputo:1967}, who first introduced it
for describing the stress-strain relation in linear elasticity, although it was predated by
the work of Armenian mathematician Mkhitar Djrbashian \cite{Djrbashian:1993}. So more
precisely, it should be called Djrbashian-Caputo fractional derivative.
Note that the fractional derivative $\Dal u$ recovers the usual first-order derivative $u'(t)$ as
$\alpha\to1^-$, provided that $u$ is sufficiently smooth \cite[p. 100]{NakagawaSakamotoYamamoto:2010}.
Thus the model \eqref{eqn:pde} can be viewed as a fractional analogue of the classical parabolic
equation. Therefore, it is natural and instructive to compare its analytical and numerical properties with that of
standard parabolic problems.

\begin{remark}
All the discussions
below extend straightforwardly to a general second-order coercive and symmetric elliptic differential
operator, given by $\nabla\cdot(a(x)\nabla u(x)) - q(x)u(x)$ with $q\ge0$ a.e.
\end{remark}

Motivated by its tremendous success in the mathematical modeling of many physical problems, over the last
two decades there has been an explosive growth in the numerical methods, algorithms, and analysis of the subdiffusion model.
More recently this interest has been extended to related topics in optimal control, inverse problems and stochastic
fractional models. The literature on the topic is vast, and the list is still fast growing in the community of
scientific and engineering computation, and more recently also in the community of numerical analysis; see, e.g.,
the recent special issues on the topic at the journals \textit{Journal of Computational Physics}
\cite{KarniadakisHesthavenPodlubny:2015} and \textit{Computational Methods in Applied Mathematics}
\cite{JinLazarovVabishchevich:2017}, for some important progress in the area of numerical methods for fractional
evolution equations.

It is impossible to survey all important and relevant works in a short review.
Instead, in this paper, we aim at only reviewing relevant works on the numerical
methods for the subdiffusion model \eqref{eqn:pde} with nonsmooth problem data. This choice allows us to highlight some distinct
features common for many nonlocal models, especially how the smoothnes of the data influences the
solution and the corresponding numerical methods.
It is precisely these features that pose substantial new mathematical and computational challenges
when compared with standard parabolic problems, and extra care has to be exerted when developing
and analyzing numerical methods.
In particular, since the solution operators of the fractional model have limited smoothing property,
a numerical method  that requires high regularity of
the solution will impose severe restrictions (compatibility conditions) on the data and generally does
not work well and thus substantially limits its scope of potential applications. 
Finally, nonsmooth data analysis is fundamental to
the rigorous study of areas related to various applications, e.g.,  optimal control, inverse problems, and stochastic
fractional diffusion (see, e.g., \cite{JinLiZhou:2017control,JinRundell:2015,Yan:2005}).

Amongst the numerous possible choices, we shall focus the review on the following four aspects:
\begin{itemize}
  \item[(i)] Regularity theory in Sobolev spaces;
  \item[(ii)] Spatial discretization via finite element methods (FEMs), e.g., standard Galerkin, lumped mass and finite volume element methods;
  \item[(iii)] Temporal discretization via time-stepping schemes (including convolution quadrature and L1 type schemes);
  \item[(iv)] Space-time formulations (Galerkin or Petrov-Galerkin type).
\end{itemize}
In each aspect, we describe some representative results and leave most of technical proofs to the references.
Further, we compare the results with that for standard parabolic problems (see, e.g., \cite{Thomee:2006}) and
give some numerical illustrations of the theory.  Finally, we complement
each part with comments on future research problems and further references. The goal of the overview is
to give readers a flavor of the numerical analysis of nonlocal problems and potential pitfalls in developing
efficient numerical methods. We also refer the readers to the excellent
surveys for other nonlocal problems and applications, namely, on problem involving fractional
(spectral and integral)  Laplacian \cite{BonitoBorthagaray:2018}, on application to image processing \cite{yang2016fractional},
and on nonlocal problems arising in peridynamics \cite{DuGunzburgerLehoucqZhou:2012}. For a nice overview on
the numerical methods for fractional-order ordinary differential equations, we refer to the paper \cite{DiethelmFordFreedLuchko:2005}.

The rest of the paper is organized as follows. For the model \eqref{eqn:pde}, in
Section \ref{sec:reg}, we describe the regularity theory and
in Sections \ref{sec:fem} and \ref{sec:time-stepping}, we discuss the finite element methods and two
popular classes of time stepping schemes, i.e., convolution quadrature and L1 type schemes,  respectively.
Then, in Section \ref{sec:space-time} we discuss two
space-time formulations for problem \eqref{eqn:pde} with $v=0$. We conclude the overview with some
further discussions in Section \ref{sec:conclus}. Throughout, the discussions focus on the case of
nonsmooth problem data, and only references directly relevant are given. Obviously, the list of references is
not meant to be complete in any sense, and strongly biased by the personal taste and limited by the knowledge of the
authors. Throughout, the notation $c$ denotes a generic constant which may change at each occurrence, but it is
always independent of the discretization parameters $h$ and $\tau$ etc. In the paper we use the standard notation on
Sobolev spaces (see, e.g., \cite{AdamsFournier:2003}).

\section{Regularity of the solution}\label{sec:reg}
First, we describe some regularity results for the model \eqref{eqn:pde}, which are crucial for rigorous
numerical analysis. To this end, we need suitable function spaces. The most convenient one for our purpose
is the space $\dot H^s(\Omega)$ defined as below \cite[Chapter 3]{Thomee:2006}. Let $\{\lambda_j\}_{j=1}^\infty$
and $\{\varphi_j\}_{j=1}^\infty$ be respectively the eigenvalues (ordered nondecreasingly with multiplicity
counted) and the $L^2(\Omega)$-orthonormal eigenfunctions of the negative Laplace operator $-\Delta$ on the
domain $\Omega$ with a zero Dirichlet boundary condition. Then $\{\varphi_j\}_{j=1}^\infty$
forms an orthonormal basis in $L^2(\Omega)$.
For any real number $s\ge-1$, we denote by $\dH s$ the Hilbert space consisting of the
functions of the form
\begin{equation*}
  v = \sum_{j=1}^\infty \langle v,\varphi_j\rangle\varphi_j,
\end{equation*}
where $\langle\cdot,\cdot\rangle$ denotes the duality pairing between $H^{-1}(\Omega)$ and $H_0^1
(\Omega)$, and it coincides with the usual $L^2(\Omega)$ inner product $(\cdot,\cdot)$ if the function
$v\in L^2(\Omega)$. The induced norm $\|\cdot\|_{\dH s}$ is defined by
\begin{equation*}
  \|v\|_{\dH s}^2=\sum_{j=1}^{\infty}\lambda_j^s\langle v,\varphi_j \rangle^2.
\end{equation*}
Then, $\|v\|_{\dH 0}=\|v\|_{L^2(\Omega)}=(v,v)^\frac{1}{2}$ is the norm in $L^2(\Omega)$ and $\|v\|_{\dH {-1}}
= \|v\|_{H^{-1}(\Omega)}$ is the norm in $H^{-1}(\Omega)$. Besides, it is easy to verify that
$\|v\|_{\dH 1}= \|\nabla v\|_{L^2(\Omega)}$ is also an equivalent norm in $H_0^1(\Omega)$
and  $\|v\|_{\dH 2}=\|\Delta v\|_{L^2(\Omega)}$ is equivalent to the norm in $H^2(\Omega)\cap H^1_0(\Omega)$,
provided the domain $\Omega$ is convex \cite[Section 3.1]{Thomee:2006}. Note that the spaces $\dot H^s(\Omega)$, $s\ge -1$, form a
Hilbert scale of interpolation spaces. Motivated by this, we denote $\|\cdot\|_{H_0^s(\Omega)}$ to
be the norm on the interpolation scale between $H^1_0(\Omega)$ and $L^2(\Omega)$ when $s$
is in $[0,1]$ and $\|\cdot\|_{H_0^{s}(\Omega)}$ to be the norm on the interpolation scale between
$L^2(\Omega)$ and $H^{-1}(\Omega)$ when $s$ is in $[-1,0]$.  Then, $\| \cdot \|_{H_0^s(\Omega)}$
and $\|\cdot\|_{\dH s}$ are equivalent for $s\in [-1,0]$ by interpolation.

There are several different ways to analyze problem \eqref{eqn:pde}. We outline one approach to derive
regularity results by means of Laplace transform below. We denote the Laplace transform of
a function $f:(0,\infty)\to\mathbb{R}$ by $\widehat{f}$ below. The starting point of the analysis is the following
identity on the Laplace transform of the Caputo fractional derivative $\partial_t^\alpha u(t)$
\cite[Lemma 2.24, p. 98]{KilbasSrivastavaTrujillo:2006}
\begin{equation*}
   \widehat{\partial_t^\alpha u}(z) = z^\alpha \widehat{u}(z) - z^{\alpha-1}u(0).
\end{equation*}
By viewing $u(t)$ as a vector-valued function, applying Laplace transform to both sizes of \eqref{eqn:pde} yields
\begin{equation*}
   z^\alpha \widehat{u}(z) -\Delta \widehat{u} = \widehat f + z^{\alpha-1}u(0),
\end{equation*}
i.e.,
\begin{equation*}
  \widehat{u}(z) = (z^\alpha-\Delta)^{-1}(\widehat f + z^{\alpha-1}u(0)).
\end{equation*}
By inverse Laplace transform and the convolution rule, the solution $u(t)$ can be formally represented by
\begin{align}\label{eqn:Sol-expr-u-const}
u(t)= F(t)v + \int_0^t E(t-s) f(s) \d s ,
\end{align}
where the solution operators $F(t)$ and $E(t)$ are respectively defined by
\begin{align*}
F(t):=\frac{1}{2\pi {\rm i}}\int_{\Gamma_{\theta,\delta }}e^{zt} z^{\alpha-1} (z^\alpha-\Delta  )^{-1}\, \d z \quad\mbox{and}\quad
E(t):=\frac{1}{2\pi {\rm i}}\int_{\Gamma_{\theta,\delta}}e^{zt}  (z^\alpha-\Delta)^{-1}\, \d z ,
\end{align*}
with integration over a contour $\Gamma_{\theta,\delta}$ in the complex plane
(oriented counterclockwise), i.e.,
\begin{equation*}
  \Gamma_{\theta,\delta}=\left\{z\in \mathbb{C}: |z|=\delta, |\arg z|\le \theta\right\}\cup
  \{z\in \mathbb{C}: z=\rho e^{\pm\mathrm{i}\theta}, \rho\ge \delta\} .
\end{equation*}
Throughout, we fix $\theta \in(\frac{\pi}{2},\pi)$ so that $z^{\al} \in \Sigma_{\al\theta}
\subset \Sigma_{\theta}:=\{0\neq z\in\mathbb{C}: {\rm arg}(z)\leq\theta\},$ for all $z\in\Sigma_{\theta}$.
Recall the following resolvent estimate for the  Laplacian $\Delta$ with homogenous Dirichlet boundary condition \cite{}:
\begin{equation} \label{eqn:resol}
  \| (z-\Delta )^{-1} \|\le c_\phi |z|^{-1},  \quad \forall z \in \Sigma_{\phi},
  \,\,\,\forall\,\phi\in(0,\pi),
\end{equation}
where $\|\cdot\|$ denotes the operator norm from $L^2(\Omega)$ to $L^2(\Omega)$.

Equivalently, using the eigenfunction expansion $\{(\lambda_j,\varphi_j)\}_{j=1}^\infty$, these
operators can be expressed as
\begin{equation*}
  F(t)v = \sum_{j=1}^\infty E_{\alpha,1}(-\lambda_jt^\alpha)(v,\varphi_j)\varphi_j\quad \mbox{and}\quad
  E(t)v = \sum_{j=1}^\infty t^{\alpha-1}E_{\alpha,\alpha}(-\lambda_jt^\alpha)(v,\varphi_j)\varphi_j.
\end{equation*}
Here $E_{\alpha,\beta}(z)$ is the two-parameter Mittag-Leffler function defined by
\cite[Section 1.8, pp. 40-45]{KilbasSrivastavaTrujillo:2006}
\begin{equation*}
  E_{\alpha,\beta}(z) = \sum_{k=0}^\infty \frac{z^{k}}{\Gamma(k\alpha+\beta)}\quad \forall z\in \mathbb{C}.
\end{equation*}
The Mittag-Leffler function $E_{\alpha,\beta}(z)$ is a generalization of the familiar exponential function
$e^z$ appearing in normal diffusion, and it can be evaluated efficiently via contour integral \cite{GorenfloLoutchko:2002b,
SeyboldHilfer:2008}. Since the solution operators involve only $E_{\alpha,\beta}(z)$
with $z$ being a negative real argument, the following decay behavior $E_{\alpha,\beta}(z)$ is crucial
to the smoothing properties of $F(t)$ and $E(t)$: for any $\alpha\in (0,1)$, the function $E_{\alpha,1}
(-\lambda t^\alpha)$ decays only polynomially like $t^{-\alpha}$ as $t\to\infty$ \cite[equation (1.8.28),
p. 43]{KilbasSrivastavaTrujillo:2006}, which contrasts sharply with the exponential decay for $e^{-\lambda t}$
appearing in normal diffusion. These important features directly translate into the limited smoothing
property in space and in time for the solution operators $E(t)$ and $F(t)$.

Next, we state a few regularity results. The proof of these results can be
found in, e.g., \cite{Bajlekov:2001,JinLiZhou:nonlinear,SakamotoYamamoto:2011}.
\begin{theorem} \label{thm:reg-u}
Let $u(t)$ be the solution to problem \eqref{eqn:pde}. Then the following statements hold.
\begin{itemize}
  \item[(i)] If $v \in \dH s$ with $s\in(-1,2]$ and $f=0$, then $u(t)\in \dH {s+2}$ and
\begin{equation*}
\|  \partial_t^{(m)}  u(t) \|_{\dH p} \le c t^{\frac{(s-p)\alpha}{2}-m} \| v \|_{\dH s}
\end{equation*}
with $0\le p-s\le 2$ and any integer $m\ge 0$ .
  \item[(ii)]If $v=0$ and $f\in L^p(0,T;L^2(\Omega))$ with $1<p<\infty$, then there holds
\begin{equation*}
\|  u\|_{L^p(0,T;\dot H^2(\Omega))}
+\|\Dal u\|_{L^p(0,T;L^2(\Omega))}
\le c\|f\|_{L^p(0,T;L^2(\Omega))}.
\end{equation*}
Moreover, if $f\in L^\infty(0,T;L^2(\Omega))$, we have for any $\epsilon\in(0,1)$
\begin{equation*}
\| u(t) \|_{ \dot H^{2-\epsilon}(\Omega) }  \le c \epsilon^{-1} t^{\epsilon\alpha} \|  f \|_{L^\infty(0,t;L^2(\Omega))}.
\end{equation*}
  \item[(iii)]If $v=0$ and $f\in C^{m-1}([0,T];L^2(\Omega))$, $\int_0^t (t-s)^{\alpha-1} \|  \partial_s^{(m)} f(s) \|_{ L^2(\Omega)}\d s<\infty$, then there holds
\begin{equation*}
\| \partial_t^{(m)}  u(t) \|_{ L^2(\Omega) }  \le c   \sum_{k=0}^{m-1} \| \partial_t^{(k)}f (0) \|_{L^2(\Omega)}t^{\alpha-k}+ \int_0^t (t-s)^{\alpha-1} \|  \partial_s^{(m)} f(s) \|_{ L^2(\Omega)}\d s.
\end{equation*}
\end{itemize}
\end{theorem}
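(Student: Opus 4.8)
The plan is to handle all three parts through the contour representations of $F$ and $E$ together with the resolvent bound \eqref{eqn:resol}, the common workhorse being the smoothing estimate
\[
  \|(z^\alpha-\Delta)^{-1}\|_{\dH{s}\to\dH{p}}\le c\,|z|^{\frac{\alpha(p-s)}{2}-\alpha},\qquad 0\le p-s\le 2,\ \ z\in\Sigma_\theta .
\]
I would read this off the spectral expansion: for $w=\sum_j w_j\varphi_j$ one has $\|(z^\alpha-\Delta)^{-1}w\|_{\dH{p}}^2=\sum_j\lambda_j^p|z^\alpha+\lambda_j|^{-2}|w_j|^2$, and since $z\in\Sigma_\theta$ forces $z^\alpha$ into the fixed sector $\Sigma_{\alpha\theta}$ (with $\alpha\theta<\pi$, hence away from $(-\infty,0]$) we get $|z^\alpha+\lambda_j|\ge c_\theta(|z|^\alpha+\lambda_j)$; the elementary inequality $\lambda^{p-s}(|z|^\alpha+\lambda)^{-2}\le|z|^{\alpha(p-s-2)}$ for $p-s\in[0,2]$ then finishes it. Throughout I would deform $\Gamma_{\theta,\delta}$ by choosing $\delta=1/t$, so that $|e^{zt}|\le e$ on the circular arc and $\Re z=\rho\cos\theta<0$ on the rays; the resulting scalar integral $\int_{\Gamma_{\theta,1/t}}|z|^{\beta}|e^{zt}|\,|\d z|$ scales exactly like $t^{-\beta-1}$.

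For (i), with $f=0$ we have $u(t)=F(t)v$, and differentiating under the integral brings down $z^m$, giving $\partial_t^{(m)}u(t)=\frac1{2\pi\mathrm i}\int_{\Gamma_{\theta,\delta}}z^{m+\alpha-1}e^{zt}(z^\alpha-\Delta)^{-1}v\,\d z$. Applying the smoothing estimate with the prescribed $s,p$ and extracting $\|v\|_{\dH{s}}$ leaves the scalar integral with $\beta=m-1+\tfrac{\alpha(p-s)}{2}$, whence the contour scaling yields $t^{-\beta-1}=t^{\frac{(s-p)\alpha}{2}-m}$. The pointwise estimate in (ii) is equally direct: from \eqref{eqn:Sol-expr-u-const} with $v=0$ one bounds $\|u(t)\|_{\dH{2-\epsilon}}\le\|f\|_{L^\infty(0,t;L^2(\Omega))}\int_0^t\|E(\sigma)\|_{L^2(\Omega)\to\dH{2-\epsilon}}\,\d\sigma$, and the smoothing estimate with $s=0,\ p=2-\epsilon$ gives $\|E(\sigma)\|_{L^2(\Omega)\to\dH{2-\epsilon}}\le c\,\sigma^{\frac{\alpha\epsilon}{2}-1}$; integrating in $\sigma$ produces a positive power of $t$ of order $\alpha\epsilon$ together with the $\epsilon^{-1}$ constant that arises precisely from the non-integrability of $\sigma^{-1}$ as $\epsilon\to0$.

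The maximal $L^p$-regularity in (ii) is the substantive claim, and here pointwise contour bounds no longer suffice. I would invoke the operator-valued Fourier-multiplier (Weis/Mikhlin) theorem for the multiplier $m(z)=\Delta(z^\alpha-\Delta)^{-1}=z^\alpha(z^\alpha-\Delta)^{-1}-I$, after reducing to the whole line and using that for $u(0)=0$ the Caputo derivative coincides with its Riemann--Liouville counterpart, so that $f\mapsto\partial_t^\alpha u=\Delta u+f$ becomes a clean temporal convolution. Crucially, because $-\Delta$ is self-adjoint and positive on the Hilbert space $L^2(\Omega)$, $\mathcal R$-boundedness reduces to uniform norm boundedness, so it is enough to check that $\{m(\mathrm i\xi)\}$ and $\{\mathrm i\xi\,m'(\mathrm i\xi)\}$ are uniformly bounded; on eigenfunctions these equal $-\lambda_j(z^\alpha+\lambda_j)^{-1}$ and $\alpha\lambda_j z^\alpha(z^\alpha+\lambda_j)^{-2}$, which are controlled by $\lambda(r+\lambda)^{-1}\le1$ and $\lambda r(r+\lambda)^{-2}\le\tfrac14$ once $z^\alpha$ lies in the open right half-plane, true for $z=\mathrm i\xi$ since $\alpha\pi/2<\pi/2$.

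For (iii), I would write $u=E*f$ and differentiate the convolution, iterating $\partial_t(E*g)=E*g'+E(t)g(0)$ to obtain $\partial_t^{(m)}u=E*\partial_t^{(m)}f+\sum_{k=0}^{m-1}\partial_t^{(m-1-k)}E(t)\,\partial_t^{(k)}f(0)$. The remainder is estimated by the weakly singular kernel $\|E(t-s)\|_{L^2(\Omega)}\le c(t-s)^{\alpha-1}$, giving the stated integral term, while each boundary term uses $\|\partial_t^{(j)}E(t)\|_{L^2(\Omega)}\le c\,t^{\alpha-1-j}$ (the contour scaling with $\beta=j-\alpha$), which supplies the powers of $t$ multiplying the traces $\partial_t^{(k)}f(0)$. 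The main obstacle is thus the maximal $L^p$-regularity in (ii): the pointwise estimates alone only deliver $L^\infty$-in-time control, and upgrading to sharp $L^p$ bounds genuinely requires the vector-valued multiplier machinery, the verification of the Mikhlin condition, and the careful finite-interval/zero-initial-data reduction; parts (i), the pointwise half of (ii), and (iii) are then routine consequences of the smoothing estimate and the contour scaling.
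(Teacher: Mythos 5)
Your proposal follows essentially the route the paper itself sets up (the paper states Theorem \ref{thm:reg-u} without proof, deferring to the cited references, but Section \ref{sec:reg} outlines exactly this Laplace-transform framework): the spectral derivation of the $\dH{s}\to\dH{p}$ resolvent smoothing bound from \eqref{eqn:resol}, the $\delta=1/t$ contour scaling, differentiation under the contour integral for (i), the convolution bound for the pointwise part of (ii), and the Leibniz-type expansion $\partial_t^{(m)}(E*f)=E*\partial_t^{(m)}f+\sum_{k=0}^{m-1}\partial_t^{(m-1-k)}E(t)\,\partial_t^{(k)}f(0)$ for (iii) are all correct. Your treatment of maximal $L^p$-regularity via the operator-valued Mikhlin/Weis theorem, with $\mathcal{R}$-boundedness collapsing to uniform boundedness in the Hilbert-space setting and the multiplier checked on eigenfunctions, is sound and is in fact how the cited references handle that part; the causality/zero-extension reduction you defer is routine.

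Two exponent discrepancies deserve explicit mention, and both are defects of the printed statement rather than gaps in your argument. First, in the pointwise estimate of (ii), your computation gives $\|E(\sigma)\|_{L^2(\Omega)\to\dH{2-\epsilon}}\le c\sigma^{\alpha\epsilon/2-1}$ and hence $\|u(t)\|_{\dH{2-\epsilon}}\le c\epsilon^{-1}t^{\alpha\epsilon/2}\|f\|_{L^\infty(0,t;L^2(\Omega))}$; the printed power $t^{\epsilon\alpha}$ corresponds to measuring in $\dH{2-2\epsilon}$ (i.e.\ renaming $\epsilon\mapsto 2\epsilon$), and your phrase ``a positive power of $t$ of order $\alpha\epsilon$'' silently glosses this factor of two. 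Second, in (iii) your boundary terms carry $\|\partial_t^{(m-1-k)}E(t)\|\le ct^{\alpha-1-(m-1-k)}=ct^{\alpha-m+k}$, so the trace $\partial_t^{(k)}f(0)$ is multiplied by $t^{\alpha-m+k}$, not by the printed $t^{\alpha-k}$. Your pairing is the correct one: already for $m=1$ and constant $f$ one has $\partial_t u(t)=E(t)f(0)\approx t^{\alpha-1}f(0)/\Gamma(\alpha)$ as $t\to0$, which is not $O(t^{\alpha})\|f(0)\|_{L^2(\Omega)}$, so the statement as printed cannot hold and is a typo (the corrected pairing is what appears in the references). In other words, your proof establishes the theorem in its correct form; just be aware that what you prove does not literally coincide with the printed exponents, and say so rather than paper over it.
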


The estimate in Theorem \ref{thm:reg-u}(i) indicates that for homogeneous problems, the solution $u(t)$ is smooth
in time $t>0$ (actually analytic in a sector in the complex plane $\mathbb{C}$ \cite[Theorem 2.1]{SakamotoYamamoto:2011}),
but has a weak singularity around $t=0$. The strength of the singularity depends on the regularity of the initial data
$v$: the smoother is $v$ (measured in the space $\dot H^s(\Omega)$), the less singular is the solution $u$ at the initial
layer. Interestingly, even if the initial data $v$ is very smooth, the solution $u$ is generally not very smooth in
time in the fractional case, which also differs from the standard parabolic case. By now, it is well known that smooth
solutions are produced by a small class of data \cite{Stynes:2016}. The condition $0\leq p-s\leq 2$ in Theorem \ref{thm:reg-u}(i)
represents an essential restriction on the smoothing property in space of order two. This restriction contrasts sharply
with that for the standard diffusion equation: the following estimate
\begin{equation*}
  \|  \partial_t^{(m)}  u(t) \|_{\dH p} \le c t^{\frac{s-p}{2}-m} \| v \|_{\dH s}
\end{equation*}
holds for any $t>0$ and any $p\geq s, m\geq 0$ (see, e.g.  \cite[Lemma 3.2, p. 39]{Thomee:2006}).
This means that  the solution operator for standard parabolic problems is infinitely smoothing in space,
as long as $t>0$. The limited smoothing property in space of the model \eqref{eqn:pde}
represents one very distinct feature, which is generic for many other nonlocal (in time) models.

The first inequality in Theorem \ref{thm:reg-u}(ii) is often known as maximal $L^p$ regularity,
which is very useful in the numerical analysis of nonlinear problems (see, e.g., \cite{KovacsLiLubich:2016,
AkrivisLiLubich2017} for standard parabolic problems and \cite{JinLiZhou:nonlinear} for subdiffusion). Theorem
\ref{thm:reg-u}(iii) asserts that the temporal regularity of the solution $u(t)$ is essentially determined by that
of the right hand side $f$. The solution $u(t)$ can still have weak singularity near $t=0$, even for a very
smooth source term $f$, which differs dramatically again from standard parabolic problems. In order to have high temporal regularity
uniformly in time $t$, for the homogeneous problem, it is necessary to impose the following (rather restrictive)
compatibility conditions: $\partial_t^{(k)}f(0)=0$, $k=0,\ldots,m-1$. In the numerical analysis, it is important to
take into account the initial singularity of the solution,
which represents one of the main challenges in developing robust numerical methods.

Now we illustrate the results for the homogeneous problem.
\begin{example}\label{exam:reg}
Consider problem \eqref{eqn:pde} on the unit interval $\Omega=(0,1)$ with
\begin{itemize}
 \item[(i)] $v=\sin(\pi x)$ and $f=0$;
 \item[(ii)] $v=\delta_{0.5}(x)$, with $\delta_{0.5}(x)$ the Dirac $\delta$ function concentrated at $x=0.5$, and $f=0$.
\end{itemize}
The solution $u(t)$ in case {\rm(i)} is given by $u(t)=E_{\alpha,1}(-\pi^2t^\alpha)\sin(\pi x)$. Since $\sin(\pi t)$
is a Dirichlet eigenfunction of the negative Laplacian $-\Delta$ on $\Omega$, it
is easy to see that for any $s\geq0$, $v\in \dH s$, but the solution $u(t)$ has limited temporal
regularity for any $\alpha\in(0,1)$: as $t\to0$, $E_{\alpha}(-\pi^2t^\alpha)\sim 1-\frac{\pi^2}{\Gamma(\alpha+1)}t^\alpha$,
which is continuous at $t=0$ but with an unbounded first-order derivative. This observation clearly reflects the inherently limited smoothing property in time of
problem \eqref{eqn:pde}. It contrasts sharply with the standard parabolic case, $\alpha=1$, for
which the solution $u(t)$ is given explicitly by $u(t)=e^{-\pi^2 t}\sin (\pi x)$ and is $C^\infty $ in time.
In Fig. \ref{fig:diffwave2-space}, we show the solution profiles for $\alpha=0.5$ and
$\alpha=1$ at two different time instances for case {\rm(ii)}. Observe that the solution
profile for $\alpha=1$ decays much faster than that for $\alpha=0.5$. For any $t>0$, the solution
$u(t)$ is very smooth in space for $\alpha=1$, but it remains nonsmooth for $\alpha=0.5$.
In the latter case, the kink at $x=0.5$ in the plot shows clearly the limited spatial smoothing
property of the solution operator $F(t)$, and it remain no matter how long the problem evolves.
\end{example}

\begin{figure}[h]
\subfigure[$t=0.15$]{\includegraphics[trim = .1cm .1cm .2cm 0.5cm, clip=true,width=0.3\textwidth]{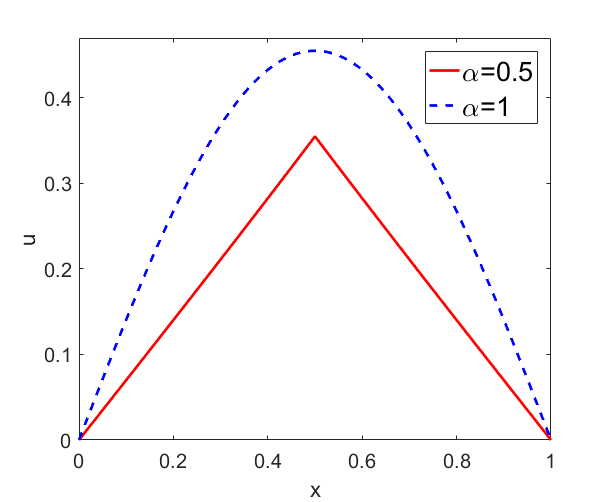}}
\subfigure[$t=0.2$]{\includegraphics[trim = .1cm .1cm .2cm 0.5cm, clip=true,width=0.3\textwidth]{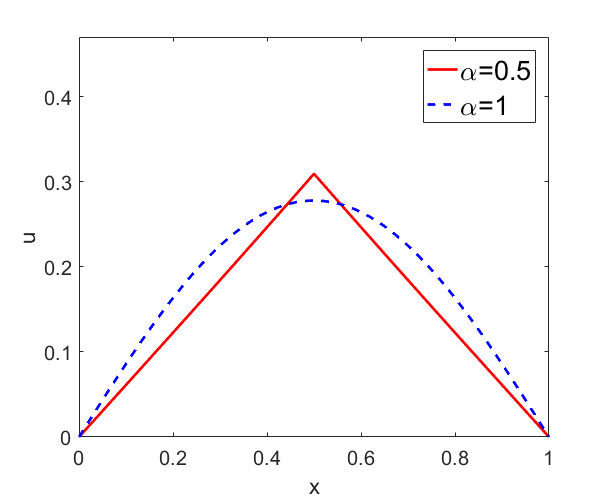}}
\subfigure[$t=0.3$]{\includegraphics[trim = .1cm .1cm .2cm 0.5cm, clip=true,width=0.3\textwidth]{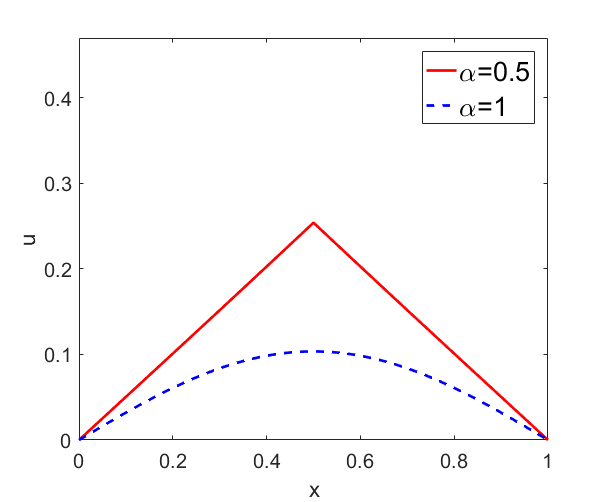}}
\caption{The solution profiles for Example \ref{exam:reg}(ii) at three time instances for $\alpha=0.5$ and $1$.}
\label{fig:diffwave2-space}
\end{figure}


The analytical theory of problem \eqref{eqn:pde} has been developed successfully in the last two decades, e.g.,
\cite{Bajlekov:2001,EidelmanKochubei:2004,Luchko:2009,Pskhu:2009,McLean:2010,SakamotoYamamoto:2011,Kochubei:2014,JinLiZhou:nonlinear,
AllenCaffarelliVasseur:2016,LiuRundellYamamoto:2016,Yamamoto:2018,GalWarma:2017}; see also the monograph \cite{Pruss:1993} for closely
related evolutionary integral equations.
Eidelman and Kochubei \cite{EidelmanKochubei:2004} derived fundamental solutions
to problem in the whole space using Fox $H$-functions, and derived various estimates,
see also \cite{SchneiderWyss:1989,GorenfloLuchkoYamamoto:2015};. Luchko \cite{Luchko:2009} studied
the existence and uniqueness of a strong solution. Sakamoto and Yamamoto \cite{SakamotoYamamoto:2011}
analyzed the problem by means of separation of variables, reducing it to an infinite system of
fractional-order ODEs, studied the existence and uniqueness of weak solutions, and proved
various regularity results including the asymptotic behavior of the solution
for $t \to 0$ and $t \to \infty$. We note that the Laplace transform technique described above is
essentially along the same line of reasoning. The important issue of properly interpreting
the initial condition (for $\alpha$ close to zero) was discussed in \cite{GorenfloLuchkoYamamoto:2015,LiLiu:2016}.

It is worth noting that techniques like separation of variables and Laplace transform are
most convenient for analyzing time-independent elliptic operators. For time-dependent elliptic
operators or nonlinear problems, e.g., time-dependent diffusion coefficients and Fokker-Planck
equation, energy arguments \cite{VergaraZacher:2015} or
perturbation arguments \cite{KimKimLim:2017} can be used to show existence and uniqueness of the solution.
However, the slightly more refined stability estimates, needed for numerical analysis of nonsmooth problem data,
often do not directly follow and have to be derived separately. This represents one of the main obstacles
in extending the results below for the model problem \eqref{eqn:pde} to these important classes of applied problems.

\section{Spatially semidiscrete approximation}\label{sec:fem}
Now we describe several spatially semidiscrete finite element schemes for problem \eqref{eqn:pde}
using the standard notation from the classical monograph \cite{Thomee:2006}. Semidiscrete methods
are usually not directly implementable and used in practical computations, but they are important
for understanding the role of the regularity of problem data and also for the analysis of some
space-time formulations and spectral, Pad\'e, and rational approximations.
Let ${\{\mathcal{T}_h\}}_{0<h<1}$ be a family
of shape regular and quasi-uniform partitions of the domain $\Omega$ into $d$-simplexes, called
finite elements, with the mesh size $h$ denoting the maximum diameter of the elements.
An approximate solution $u_h$ is then sought in the finite element space $X_h\equiv
X_h(\Omega)$ of continuous piecewise linear functions over the triangulation $\mathcal{T}_h $, defined by
\begin{equation*}
  X_h =\left\{\chi\in H^1_0(\Omega): \ \chi ~~\mbox{is a linear function over}  ~~\K,
 \,\,\,\,\forall \K \in \mathcal{T}_h\right\}.
\end{equation*}
To describe the schemes, we need the $L^2(\Omega)$ projection $P_h:L^2(\Omega)\to X_h$ and
Ritz projection $R_h:\dH1\to X_h$, respectively, defined by (recall that $(\cdot, \cdot)$
denotes the $L^2(\Omega)$ inner product)
\begin{equation*}
  \begin{aligned}
    (P_h \psi,\chi) & =(\psi,\chi) \quad\forall \chi\in X_h,\psi\in L^2(\Omega),\\
    (\nabla R_h \psi,\nabla\chi) & =(\nabla \psi,\nabla\chi) \quad \forall \chi\in X_h, \psi\in \dot H^1(\Omega).
  \end{aligned}
\end{equation*}
Then by means of duality, the operator $P_h$ can be boundedly extended to $\dH s$, $s \in [-1,0]$.
The following approximation properties of $R_h$ and $P_h$ are well known:
\begin{align*}
  \|P_h\psi-\psi\|_{L^2(\Omega)}+h\|\nabla(P_h\psi-\psi)\|_{L^2(\Omega)}& \leq ch^q\|\psi\|_{H^q(\Omega)}\quad \forall\psi\in \dH q, q=1,2,\\
  \|R_h\psi-\psi\|_{L^2(\Omega)}+h\|\nabla(R_h\psi-\psi)\|_{L^2(\Omega)}& \leq ch^q\|\psi\|_{H^q(\Omega)}\quad \forall\psi\in \dH q, q=1,2.
\end{align*}

By multiplying both sides of equation \eqref{eqn:pde} by a test function $\varphi\in H_0^1(\Omega)$,
integrating over the domain $\Omega$ and then applying integration by parts formula yield the following
weak formulation of problem \eqref{eqn:pde}: find $u(t) \in H^1_0(\Omega)$ for $t>0$ such that
\begin{equation}\label{eqn:weak}
(\Dal u(t), \varphi) + a(u(t), \varphi) = (f,\varphi),
\quad \forall \varphi \in H^1_0(\Omega),\ \ t>0, \mbox{with }u(0)=v,
\end{equation}
where $a(u,\varphi)=(\nabla u, \nabla \varphi)$ for $u,\varphi\in H_0^1(\Omega)$ denotes the bilinear form
for the elliptic operator $A=-\Delta$ (with a zero Dirichlet boundary condition). Then the spatially
semidiscrete approximation of problem \eqref{eqn:pde} is to find $u_h (t)\in X_h$ such that
\begin{equation}\label{eqn:semi}
 {[ \Dal u_{h}(t),\chi]}+ a(u_h(t),\chi)= (f,\chi),
\quad \forall \chi\in X_h,\ t >0, \mbox{with }u_h(0)=v_h,
\end{equation}
where $v_h \in X_h$ is an approximation of the initial data $v$, and the notation $[\cdot,\cdot]$ refers to a suitable
inner product on the space $X_h$, approximating the usual $L^2(\Omega)$ inner product $(\cdot,\cdot)$.
Following Thom\'ee \cite{Thomee:2006}, we shall take $v_h=R_hv$ in case of smooth initial data $v\in \dH2$ and $v_h=P_hv$
in case of nonsmooth initial data, i.e., $v\in \dH s$, $-1\leq s\le0$. Moreover, the spatially semidiscrete variational
problem \eqref{eqn:semi} can be written in an operator form as
\begin{equation*}
 \Dal u_{h}(t) +  A_h u_h(t) = f_h, \quad \forall \chi\in X_h,\ t >0, \mbox{with }u_h(0)=v_h,
\end{equation*}
where $A_h$ is a discrete approximation to the elliptic operator $A$ 
on the space $X_h$, and will be given below.

Based on the abstract form \eqref{eqn:semi}, we shall present three predominant finite element type
discretization methods in space, i.e., standard Galerkin finite element (SG) method,
lumped mass (LM) method and finite volume element (FVE) method, below. In passing, we note that
in principle any other spatial discretization methods, e.g., finite difference methods \cite{LinXu:2007,ZhangSun:2011adi},
collocation, and spectral methods \cite{LinXu:2007,chenHestaven2015multi} can also be used. Our choice of the FEMs is motivated
by nonsmooth problem data.

\subsection{Standard Galerkin finite element.}
The SG method is obtained from \eqref{eqn:semi}
when the approximate inner product $[\cdot,\cdot]$ is chosen to be the usual $L^2(\Omega)$ inner product $(\cdot,\cdot)$.
The SG method was first developed and rigorously analyzed for nonsmooth data in \cite{JinLazarovZhou:SIAM2013,JinLazarovPasciakZhou:2013,
JinLazarovPasciakZhou:2015} for problem \eqref{eqn:pde} on convex polygonal domains, and in
\cite{LeMcLeanLamichhane:2017} for the case of nonconvex domains.

Upon introducing the discrete Laplacian $\Delta_h: X_h\to X_h$ defined by
\begin{equation*}
  -(\Delta_h\psi,\chi)=(\nabla\psi,\nabla\chi)\quad\forall\psi,\,\chi\in X_h,
\end{equation*}
and $f_h= P_h f$, we may write the spatially discrete problem \eqref{eqn:semi} as
\begin{equation}\label{fem-operator}
   \Dal u_{h}(t)-\Delta_h u_h(t) =f_h(t) \for t\ge0 \quad \mbox{with} \quad  u_h(0)=v_h.
\end{equation}
Now we introduce the semidiscrete analogues of $F(t)$ and $E(t)$ for $t>0 $:
\begin{align*}
F_h(t):=\frac{1}{2\pi {\rm i}}\int_{\Gamma_{\theta,\delta }}e^{zt} z^{\alpha-1} (z^\alpha- \Delta_h)^{-1}\, \d z \qquad\text{and}\qquad
E_h(t):=\frac{1}{2\pi {\rm i}}\int_{\Gamma_{\theta,\delta}}e^{zt}  (z^\alpha- \Delta_h)^{-1}\, \d z.
\end{align*}
Then the solution $u_h(t)$ of the semidiscrete problem \eqref{fem-operator} can be succinctly expressed by:
\begin{equation}\label{Duhamel_o}
     u_h(t)= F_h(t) v_h + \int_0^t E_h(t-s) f_h(s)\,\d s.
\end{equation}

Now we give pointwise-in-time $L^2(\Omega)$ error estimates for the semidiscrete Galerkin approximation $u_h$.
\begin{theorem}\label{thm:error-fem}
Let $u$ be the solution of problem \eqref{eqn:pde} and $u_h$ be the solution of problem \eqref{fem-operator},
respectively. Then with $\ell_h=|\log h|$, for any $t>0$, the following error estimates hold:
\begin{itemize}
\item[$\rm(i)$] If $f\equiv 0$, $v\in L^2(\Omega)$, and $v_h=P_hv$, then
\begin{equation*}
 \|(u-u_h)(t)\|_{L^2(\Omega)} \le ch^2\ell_h t^{-\alpha} \|v\|_{L^2(\Omega)}.
\end{equation*}
\item[$\rm(ii)$] If $f\equiv0$, $v\in \dH 2$, $v_h=R_hv$, then
\begin{equation*}
 \|(u-u_h)(t)\|_{L^2(\Omega)}  \le ch^2 \| \Delta v\|_{L^2(\Omega)}.
\end{equation*}
\item[$\rm(iii)$] If $f\in L^\infty(0,T;L^2(\Omega))$ and $v\equiv0$, then
\begin{equation*}
 \|(u-u_h)(t)\|_{L^2(\Omega)}  \le c h^2 \ell_h^2\| f\|_{L^\infty(0,t;L^2(\Omega))}.
\end{equation*}
\end{itemize}
\end{theorem}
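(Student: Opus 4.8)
The plan is to reduce all three estimates to a single operator-norm bound for the difference of the continuous and semidiscrete resolvents, and then to recombine through the contour (Dunford--Taylor) representations of the solution operators. Writing $u(t)=F(t)v+\int_0^t E(t-s)f(s)\,\d s$ and, from \eqref{Duhamel_o}, $u_h(t)=F_h(t)v_h+\int_0^t E_h(t-s)P_hf(s)\,\d s$, the error in each case becomes an integral over $\Gamma_{\theta,\delta}$ of $e^{zt}$ times a weight ($z^{\alpha-1}$ for the $F$–part, $1$ for the $E$–part) times the resolvent difference applied to the data. So the heart of the matter is the estimate
\begin{equation*}
\big\|\big[(z^\alpha-\Delta)^{-1}-(z^\alpha-\Delta_h)^{-1}P_h\big]g\big\|_{L^2(\Omega)}\le c\,h^2\ell_h\,\|g\|_{L^2(\Omega)},\qquad z\in\Gamma_{\theta,\delta},
\end{equation*}
uniform in $z$, together with a log-free analogue when $g$ is measured in a stronger norm for the smooth-data case.

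To prove this I would read $w=(z^\alpha-\Delta)^{-1}g$ and $w_h=(z^\alpha-\Delta_h)^{-1}P_hg$ as the exact and Galerkin solutions of the complex-shifted elliptic problem $z^\alpha(w,\chi)+a(w,\chi)=(g,\chi)$, which yields the Galerkin orthogonality $z^\alpha(w-w_h,\chi)+a(w-w_h,\chi)=0$ for all $\chi\in X_h$. Splitting $w-w_h=\rho+\theta$ with $\rho=w-R_hw$ and $\theta=R_hw-w_h\in X_h$, the Ritz property kills $a(\rho,\chi)$ and leaves $z^\alpha(\theta,\chi)+a(\theta,\chi)=-z^\alpha(\rho,\chi)$; testing with $\chi=\theta$ and using that $z^\alpha\in\Sigma_{\alpha\theta}$ with $\alpha\theta<\pi$ gives the sector coercivity $|z^\alpha\|\theta\|^2+\|\nabla\theta\|^2|\ge \kappa(|z|^\alpha\|\theta\|^2+\|\nabla\theta\|^2)$, hence $\|\theta\|\le c\|\rho\|$. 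The approximation property of $R_h$ gives $\|\rho\|\le ch^2\|w\|_{\dot H^2}=ch^2\|\Delta w\|$, and since $\Delta w=z^\alpha w-g$ the resolvent estimate \eqref{eqn:resol} (applied with $z\mapsto z^\alpha\in\Sigma_{\alpha\theta}$) yields $\|\Delta w\|\le |z|^\alpha\|w\|+\|g\|\le c\|g\|$. The delicate point is the \emph{borderline} $L^2$-data case: pushing the duality/best-approximation step to the very endpoint $g\in L^2$ on a convex polygon is what forces the logarithmic factor $\ell_h$, whereas data in $\dot H^2$ buys a genuine power of $h$ with no logarithm.

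For part (i) I would insert this into $F(t)v-F_h(t)P_hv=\frac{1}{2\pi\mathrm{i}}\int_{\Gamma_{\theta,\delta}}e^{zt}z^{\alpha-1}[(z^\alpha-\Delta)^{-1}-(z^\alpha-\Delta_h)^{-1}P_h]v\,\d z$ and deform with $\delta=1/t$; the elementary bound $\int_{\Gamma_{\theta,\delta}}|e^{zt}||z|^{\alpha-1}\,|\d z|\le ct^{-\alpha}$ then gives $\|(u-u_h)(t)\|\le ch^2\ell_h t^{-\alpha}\|v\|$. Part (ii) is cleaner and is best treated by the Ritz-projection/error-equation technique of Thom\'ee adapted to the fractional setting: with $v_h=R_hv$, the difference $\theta=u_h-R_hu$ solves a semidiscrete equation driven by $P_h\Dal(R_hu-u)$ with vanishing initial value, and the uniform regularity $\|u(t)\|_{\dot H^2}\le c\|\Delta v\|$ from Theorem~\ref{thm:reg-u}(i) (with $s=p=2$), the bound $\|R_hu-u\|\le ch^2\|u\|_{\dot H^2}$, and the smoothing of $E_h$ deliver the $t$-uniform, log-free estimate $ch^2\|\Delta v\|$.

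Part (iii) is where the main obstacle lies. By Duhamel, $u-u_h=\int_0^t[E(t-s)-E_h(t-s)P_h]f(s)\,\d s$, and the contour representation of $E-E_hP_h$ (now with weight $1$) combined with the lemma gives the smoothing bound $\|E(\tau)-E_h(\tau)P_h\|\le ch^2\ell_h\,\tau^{-1}$. This is \emph{not} integrable in $\tau$ near $0$, so I would pair it with the crude short-time bound $\|E(\tau)-E_h(\tau)P_h\|\le c\tau^{\alpha-1}$ coming from $E(\tau)\sim\tau^{\alpha-1}$, split the convolution at the crossover $\tau_\ast=(h^2\ell_h)^{1/\alpha}$, and estimate $\int_0^{\tau_\ast}\tau^{\alpha-1}\,\d\tau\le ch^2\ell_h$ together with $\int_{\tau_\ast}^t ch^2\ell_h\,\tau^{-1}\,\d\tau\le ch^2\ell_h\log(t/\tau_\ast)$. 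Since $\log(1/\tau_\ast)\sim\ell_h$, the second term contributes the extra logarithm, producing $\|(u-u_h)(t)\|\le ch^2\ell_h^2\|f\|_{L^\infty(0,t;L^2(\Omega))}$. The technical crux throughout is establishing the sharp, $z$-uniform resolvent error with the correct logarithmic dependence, and then taming the logarithmically divergent time convolution in part (iii) so as to land exactly on the factor $\ell_h^2$.
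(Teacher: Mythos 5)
Your proposal is correct, but it takes a genuinely different route from the paper. The paper's proof (sketched there for part (i)) stays entirely in the time domain: it splits $u_h-u=(u_h-P_hu)+(P_hu-u)=:\vartheta+\varrho$, bounds $\varrho$ by the approximation property of $P_h$ and Theorem \ref{thm:reg-u}, derives the error equation $\Dal\vartheta-\Delta_h\vartheta=-\Delta_h(R_hu-P_hu)$ with $\vartheta(0)=0$, and estimates the Duhamel integral by inserting fractional powers $(-\Delta_h)^{\epsilon}$, combining the smoothing of $E_h$ with an inverse inequality, and finally choosing $\epsilon=1/\ell_h$; the logarithm in the statement is exactly the price of this $\epsilon$-trick. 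You instead work in the Laplace domain, reducing everything to a $z$-uniform resolvent error estimate proved by Galerkin orthogonality for the shifted elliptic problem --- which is precisely the Fujita--Suzuki operator trick that Remark \ref{rmk:fem-err} alludes to. Two points deserve attention. First, contrary to your assertion that the borderline $L^2(\Omega)$ case ``forces'' the factor $\ell_h$, your own argument, namely $\|\theta\|_{L^2(\Omega)}\le c\|\rho\|_{L^2(\Omega)}\le ch^2\|\Delta w\|_{L^2(\Omega)}\le ch^2\|g\|_{L^2(\Omega)}$, proves the resolvent estimate with \emph{no} logarithm at all; this is harmless (your lemma is then stronger than you claim, and it yields the sharper bounds $ch^2t^{-\alpha}\|v\|_{L^2(\Omega)}$ in (i) and, rerunning your crossover argument with $\tau_\ast=h^{2/\alpha}$, the bound $ch^2\ell_h\|f\|_{L^\infty(0,t;L^2(\Omega))}$ in (iii)), but your diagnosis of where the logarithm comes from is off: in the paper it arises from the $\epsilon$-trick in part (i) and, per Remark \ref{rmk:fem-err}, is tied to the limited smoothing property in part (iii), not to elliptic FEM duality on convex polygons. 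Second, in part (ii) your error equation for $\theta=u_h-R_hu$ is driven by $P_h\Dal(u-R_hu)$, so the regularity you actually need is $\|\Dal u(s)\|_{\dH 2}=\|u(s)\|_{\dH 4}\le cs^{-\alpha}\|v\|_{\dH 2}$ (Theorem \ref{thm:reg-u}(i) with $p=4$, $s=2$), not merely the uniform bound $\|u(t)\|_{\dH 2}\le c\|\Delta v\|$ that you cite; the convolution then closes because $\int_0^t(t-s)^{\alpha-1}s^{-\alpha}\,\d s=B(\alpha,1-\alpha)$ is bounded uniformly in $t$. As to what each approach buys: the paper's argument is self-contained, needing only the semidiscrete smoothing of $E_h$ and inverse estimates, but it cannot avoid the logarithm; your route requires the extra uniform-in-$z$ elliptic lemma (together with the fixed choice $\theta\in(\frac{\pi}{2},\pi)$ so that $\alpha\theta<\pi$, which keeps all constants independent of $z$), but in exchange treats (i)--(iii) by a single mechanism, removes the spurious logarithm in (i), and extends directly to very weak data as in Remark \ref{rmk:fem-err}.
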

\begin{proof}
We only briefly sketch the proof for part (i) to give a flavor, and refer interested
readers to \cite{JinLazarovZhou:SIAM2013,JinLazarovPasciakZhou:2015} for further details.
In a customary way, we split the error $u_h(t)-u(t)$ into two terms as
\begin{equation*}
u_h-u= (u_h-P_hu)+(P_hu-u):=\vth + \rh.
\end{equation*}
By the approximation property of the $L^2(\Omega)$ projection $P_h$ and Theorem \ref{thm:reg-u}, we have for any $t>0$
\begin{equation*}
 \| \rh(t) \|_{L^2(\Omega)} 
 \le ch^2 \| u(t) \|_{H^2(\Omega)} \le ch^2 t^{-\alpha} \|  v \|_{L^2(\Omega)}.
\end{equation*}
So it remains to obtain proper estimates on $\vth(t) $. Obviously, $ P_h \Dal \rh = \Dal P_h(P_hu-u)=0$
and using the identity $\Delta_hR_h=P_h\Delta$ \cite[equation (1.34), p. 11]{Thomee:2006}, we deduce that $\vth$ satisfies:
\begin{equation*}
 \Dal \vth(t) -\Delta_h \vth(t) = -\Delta_h (R_h u - P_h u)(t), \quad t>0, \quad \vth(0)=0.
\end{equation*}
Then with the help of Duhamel's formula \eqref{Duhamel_o}, $\vth(t)$ can be represented by
\begin{equation*}
   \begin{aligned}
    \vth(t) &=  -\int_0^t E_h(t-s)\Delta_h(R_hu-P_hu)(s)\,\d s\\
     &= \int_0^t (-\Delta_h)^{1-\epsilon}E_h(t-s)(-\Delta_h)^{\epsilon}(R_hu-P_hu)(s)\,\d s,
   \end{aligned}
\end{equation*}
where the constant $\epsilon\in(0,1)$ is to be chosen below. Consequently,
\begin{equation*}
    \|\vth(t)\|_{L^2(\Omega)} \leq
\int_0^t\|(-\Delta_h)^{1-\epsilon}E_h(t-s)\|\|(-\Delta_h)^{\epsilon}(R_hu-P_hu)(s)\|_{L^2(\Omega)}\d s,
\end{equation*}
where $(-\Delta_h)^\epsilon$ is the fractional power of $-\Delta_h$ defined in the spectral sense.
That is, if $(\lambda_j^h, \phi_j^h)$ are the eigenvalues and eigenfunctions of $-\Delta_h$,
then for $v \in X_h$,  $(-\Delta_h)^\epsilon v = \sum_j  (\lambda_j^h)^\epsilon (v,\phi_j^h) \phi_j^h$.
Now recall the smoothing property of the semidiscrete solution operator $E_h(t)$
\begin{equation*}
  \|E_h(t)(-\Delta_h)^s\|\leq ct^{-1+(1-s)\alpha}\quad \forall s\in[0,1],
\end{equation*}
which follows directly from the resolvent estimate \eqref{eqn:resol} (for $\Delta_h$), and the inverse estimate for FEM functions
\begin{equation*}
  \|(-\Delta_h)^s(R_hu-P_hu)(t)\|_{L^2(\Omega)} \leq c h^{-2s}\|(R_hu-P_hu)(t)\|_{L^2(\Omega)}\quad\forall s\in[0,1].
\end{equation*}
Thus, by the triangle inequality and the approximation properties of $R_h$ and $P_h$, we deduce
\begin{equation*}
  \|(R_hu-P_hu)(t)\|_{L^2(\Omega)} \leq \|(R_hu-u)(t)\|_{L^2(\Omega)} + \|(P_hu-u)(t)\|_{L^2(\Omega)}\leq ch^2\|u(t)\|_{H^2(\Omega)}.
\end{equation*}
The preceding estimates together with Theorem \ref{thm:reg-u} imply
\begin{equation*}
\begin{split}
  \|\vth(t) \|_{L^2(\Omega)}  &\le c h^{2-2\epsilon}\int_0^t  (t-s)^{\epsilon\alpha-1}\|u(s)\|_{H^2(\Omega)}\,\d s\\
  &\le c h^{2-2\epsilon}\| v \|_{L^2(\Omega)}\int_0^t  (t-s)^{\epsilon\alpha-1}s^{-\alpha}\,\d s\\
  & \le c \epsilon^{-1}h^{2-2\epsilon} t^{-\alpha} \| v \|_{L^2(\Omega)}.
\end{split}
\end{equation*}
The desired assertion follows by choosing $\epsilon=1/\ell_h$.
\end{proof}

\begin{remark}\label{rmk:fem-err}
It is instructive to compare the error estimate in Theorem \ref{thm:error-fem} with that for standard parabolic problems. For example, in
the latter case, for the homogeneous problem with $v\in L^2(\Omega)$, the following error estimate holds \cite[Theorem 3.5, p. 47]{Thomee:2006}:
\begin{equation*}
  \|u(t)-u_h(t)\|_{L^2(\Omega)}\leq ch^2t^{-1} \|v\|_{L^2(\Omega)}.
\end{equation*}
This estimate is comparable with that in Theorem \ref{thm:error-fem}(i), apart from the log factor $\ell_h$,
which can be overcome using an operator trick due to Fujita and Suzuki \cite{FujitaSuzuki:1991}. Hence,
in the limit $\alpha\to1^-$, the result in the fractional case essentially recovers that for the standard parabolic case.
The log factor $\ell_h$ in the estimate for the inhomogeneous problem in Theorem \ref{thm:error-fem}(iii)
is due to the limited smoothing property, cf. Theorem \ref{thm:reg-u}(ii). It is unclear whether the factor
$\ell_h$ is intrinsic or due to the limitation of the proof technique.

Upon extension, the following error estimate analogous to Theorem \ref{thm:error-fem}(i) holds for very weak
initial data, i.e., $v\in H^{-s}(\Omega)$, $0\leq s\leq 1$ \cite[Theorem 2]{JinLazarovPasciakZhou:2013}:
\begin{equation*}
  \|u(t)-u_h(t)\|_{L^2(\Omega)}\leq ch^{2-s}\ell_ht^{-\alpha}\|v\|_{H^{-s}(\Omega)}.
\end{equation*}
\end{remark}

\subsection{Two variants (lumped mass and finite volume) of Galerkin method.}
Now we discuss two variants of the standard Galerkin FEM, i.e., lumped mass FEM and finite volume element method.
These methods have also been analyzed for nonsmooth data, but less extensively \cite{JinLazarovZhou:SIAM2013,
JinLazarovPasciakZhou:2015,KaraaMustaphaPani:2017,Kopteva:2017}. These variants are essential for some applications:
the lumped mass FEM is important for preserving qualitative properties of the approximations, e.g., positivity
\cite{ChatzipantelidisHorvathThomee:2015,JinLazarovThomeeZhou:2017}, while the finite volume method
inherits the local conservation property of the physical problem.

First, we describe the lumped mass FEM  (see, e.g. \cite[Chapter 15, pp. 239--244]{Thomee:2006}), where the mass
matrix is replaced by a diagonal matrix with the row sums of the original mass matrix as its diagonal elements.
 Specifically, let $\zK_j $, $j=1,\dots,d+1$ be
the vertices of a $d$-simplex $\K \in \mathcal{T}_h$. Consider the following quadrature formula
\begin{equation*}
Q_{\K,h}(f) = \frac{|\K|}{d+1} \sum_{j=1}^{d+1} f(\zK_j) \approx \int_\K f \d x\quad \forall f\in C(K).
\end{equation*}
where $|K|$ denotes the area/volume of the simplex $K$. Then we define an approximate
$L^2(\Omega)$-inner product $(\cdot,\cdot)_h$ in $X_h$ by
\begin{equation*}
(w, \chi)_h = \sum_{\K \in \mathcal{T}_h}  Q_{\K,h}(w \chi).
\end{equation*}
The lumped mass FEM is to find $ \luh (t)\in X_h$ such that
\begin{equation*}
 {(\Dal \luh, \chi)_h}+ a(\luh,\chi)= (f, \chi)
\quad \forall \chi\in X_h,\ t >0, \quad\mbox{with }\luh(0)=P_hv.
\end{equation*}
Then we introduce the discrete Laplacian $-\bDelh:X_h\rightarrow X_h$, corresponding to
the inner product $(\cdot,\cdot)_h$, by
\begin{equation*}
  -(\bDelh\psi,\chi)_h = (\nabla \psi,\nabla \chi)\quad \forall\psi,\chi\in X_h.
\end{equation*}

\begin{remark}
In a rectangular domain $\Omega$ and a uniform square mesh partitioned into triangles
{\rm(}by connecting the lower left corner with the upper right corner{\rm)} the operator
${\color{blue}\bar\Delta_h}$ is identical with the canonical five-point finite difference approximation of
the Laplace operator. Such relation may allow extending the analysis below to various
finite difference approximations of problem \eqref{eqn:pde}.
\end{remark}

Also, we introduce a projection operator $\bar P_h: L^2(\Omega) \rightarrow X_h$ by
$$(\bar P_h f, \chi)_h = (f, \chi), \quad \forall \chi\in X_h.$$
Then with $f_h = \bar P_hf$, the lumped mass FEM can be written in an operator form as
\begin{equation}\label{eqn:fem-lumped}
  \Dal{\luh}(t)-\bDelh \luh(t) = f_h(t) \quad \mbox{ for }t\geq 0 \quad \mbox{with }\luh(0)= P_h v.
\end{equation}

Next, we describe the finite volume element (FVE) method (see, e.g., \cite{ChouLi:2000,ChatzLazarovThomee:2013}).
It is based on a discrete version of the local conservation law
\begin{equation}
	\label{FVE}
  \int_V\Dal u(t) \d x - \int_{\partial V}\frac{\partial u}{\partial n}\d s
  = \int_V f \,\d x, \quad \mbox{for }t> 0,
\end{equation}
valid for any $V\subset\Omega$ with a piecewise smooth boundary $\partial V$, with $n$ being the unit outward
normal to $\partial V$.  The FVE requires \eqref{FVE} to be satisfied for $V=V_j,\ j=1,
\dots,N$, which  are disjoint and known as control volumes associated with the nodes $P_j$ of
$\mathcal{T}_h$. Then the discrete problem reads: find $\tu_{h}(t)\in X_h$ such that
\begin{equation}\label{FVE-0}
    \int_{V_j}\Dal \tu_{h}(t) \,\d x-\int_{\partial V}\frac{\tu_{h}}{\partial n}\d s =\int_{V_j} f \,\d x,
    \quad   \mbox{for }t\geq 0,
    \quad \text{with}~~ \tu_h(0)=P_hv,
\end{equation}
It can be recast as a Galerkin method \cite{ChatzLazarovThomee:2013}, by letting
\begin{equation*}
Y_h= \left\{ \varphi\in L^2(\Omega): \varphi|_{V_j} = \text{constant},
~~j=1,2,...,N; \ \varphi = 0~~\text{outside}~~ \cup_{j=1}^N V_j  \right\},
\end{equation*}
introducing the interpolation operator $J_h:C(\Omega)\to Y_h$ by
$(J_hv)(P_j)=v(P_j)$, $j=1,\ldots,N$, and
then defining an approximate $L^2(\Omega)$ inner product
$\langle\chi,\psi\rangle=(\chi,J_h\psi)$ for all $\chi,\psi
\in X_h$. The FVE method \eqref{FVE-0} can be reformulated by
\begin{equation*}
 \langle\Dal\tu_{h}(t),\chi \rangle + a(\tu(t),\chi) = (f(t), J_h \chi) \quad \mbox{ for }t\geq 0 \quad \mbox{with }\tu_h(0)= P_h v \in X_h.
\end{equation*}
In order to be consistent with \eqref{eqn:semi}, we perturb the right hand side to $(f(t), \chi)$.
Then the FVE is to find $\tu_{h}\in X_h$ such that
\begin{equation}\label{fem-FVE}
 \langle\Dal\tu_{h}(t),\chi \rangle + a(\tu(t),\chi) = (f(t),  \chi) \quad \mbox{ for }t\geq 0 \quad \mbox{with }\tu_h(0)=P_h v .
\end{equation}
Thus, it corresponds to \eqref{eqn:semi} with $[\cdot,\cdot]=\langle\cdot,\cdot\rangle$. By introducing
the discrete Laplacian $-\widetilde \Delta_h:X_h\rightarrow X_h$, corresponding to the inner product
$\langle\cdot,\cdot\rangle$, defined by
\begin{equation*}
  -\langle\widetilde \Delta_h\psi,\chi\rangle = (\nabla \psi,\nabla \chi)\quad \forall\psi,\chi\in X_h,
\end{equation*}
and a projection operator $\widetilde P_h: L^2(\Omega) \rightarrow X_h$ defined by
\begin{equation*}
  \langle\widetilde P_h f, \chi\rangle = (f, \chi) \quad \forall \chi\in X_h.
\end{equation*}
In this way, the FVE method \eqref{fem-FVE} can be written with $f_h = \widetilde P_h f$ in an operator form as
\begin{equation}\label{eqn:fem-fvem}
  \Dal\tu_{h}(t)-\widetilde \Delta_h \tu_{h}(t) = f_h(t) \quad \mbox{ for }t\geq 0 \quad \mbox{with } \tu_{h}(0)=P_hv.
\end{equation}

For the analysis of the LM and FVE methods, we recall a useful quadrature error operator $Q_h: X_h\rightarrow X_h$ defined by
\begin{equation}\label{eqn:Q}
  (\nabla Q_h\chi,\nabla \psi) = \epsilon_h(\chi,\psi)
       : = [\chi,\psi]-(\chi,\psi)\quad \forall \chi,\psi\in X_h.
\end{equation}
The operator $Q_h$ represents the quadrature error in a special way. It satisfies the following error estimate
\cite[Lemma 2.4]{chatzipa-l-thomee12} for LM method and \cite[Lemma 2.2]{ChatzLazarovThomee:2013} for FVE method.
\begin{lemma}\label{lem:Q}
Let $A_h$ be $-\bDelh$ or $-\widetilde \Delta_h$, and $Q_h$ be the operator defined by \eqref{eqn:Q}. Then there holds
\begin{equation*}
  \|\nabla Q_h\chi\|_{L^2(\Omega)}+h\|A_h Q_h\chi\|_{L^2(\Omega)}\leq ch^{p+1}\|\nabla^p\chi\|_{L^2(\Omega)}
\quad \forall \chi\in X_h, ~~~p=0,1.
\end{equation*}
Furthermore, if the meshes are symmetric {\rm(}for details and illustration,
see \cite[Section 5, Fig. 2 and 3]{chatzipa-l-thomee12}{\rm)}, then there holds
\begin{equation}\label{eqn:condQ}
\|Q_h\chi\|_{L^2(\Omega)}\leq ch^2\|\chi\|_{L^2(\Omega)}\quad \forall\chi \in X_h.
\end{equation}
\end{lemma}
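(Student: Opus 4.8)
\emph{Overview.} The plan is to treat the two assertions separately: the stability-type bounds for $\nabla Q_h$ and $A_h Q_h$ hold on any shape-regular, quasi-uniform family and follow from a local quadrature estimate together with a self-duality trick, whereas the sharpened $L^2$ bound \eqref{eqn:condQ} requires mesh symmetry and a duality argument. I would reduce the first assertion to an element-wise estimate for the quadrature error $\epsilon_K(\chi,\psi):=[\chi,\psi]_K-(\chi,\psi)_{L^2(K)}$. Since the vertex rule $Q_{\K,h}$ (respectively the control-volume interpolation $J_h$ in the FVE case) is exact for the product of a constant and a linear function, $\epsilon_K$ is bilinear and vanishes whenever either argument is constant; hence it depends only on the (elementwise constant) gradients $\nabla\chi|_\K$ and $\nabla\psi|_\K$. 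A scaling argument on the reference element then gives $|\epsilon_K(\chi,\psi)|\le c h_\K^2\|\nabla\chi\|_{L^2(\K)}\|\nabla\psi\|_{L^2(\K)}$, and summation over $\K\in\mathcal{T}_h$ yields $|\epsilon_h(\chi,\psi)|\le c h^2\|\nabla\chi\|_{L^2(\Omega)}\|\nabla\psi\|_{L^2(\Omega)}$. Applying an inverse inequality to the first factor produces the companion bound $|\epsilon_h(\chi,\psi)|\le c h\|\chi\|_{L^2(\Omega)}\|\nabla\psi\|_{L^2(\Omega)}$, and the two together cover $p=0,1$.

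\emph{The $\nabla Q_h$ and $A_h Q_h$ bounds.} Next I would exploit the defining relation \eqref{eqn:Q} with the admissible test function $\psi=Q_h\chi$. This self-duality gives $\|\nabla Q_h\chi\|_{L^2(\Omega)}^2=\epsilon_h(\chi,Q_h\chi)\le c h^{p+1}\|\nabla^p\chi\|_{L^2(\Omega)}\|\nabla Q_h\chi\|_{L^2(\Omega)}$, whence $\|\nabla Q_h\chi\|_{L^2(\Omega)}\le c h^{p+1}\|\nabla^p\chi\|_{L^2(\Omega)}$. For the second term I would invoke the inverse inequality $\|A_h w\|_{L^2(\Omega)}\le c h^{-1}\|\nabla w\|_{L^2(\Omega)}$ for $w\in X_h$, which follows from the definition of $A_h$, the equivalence of the norm induced by $[\cdot,\cdot]$ with the $L^2(\Omega)$ norm on $X_h$, and a standard inverse estimate. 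Taking $w=Q_h\chi$ and combining with the previous bound gives $h\|A_h Q_h\chi\|_{L^2(\Omega)}\le c\|\nabla Q_h\chi\|_{L^2(\Omega)}\le c h^{p+1}\|\nabla^p\chi\|_{L^2(\Omega)}$, which completes the first assertion. The FVE case is identical in structure; only the explicit computation of $\epsilon_K$ changes.

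\emph{The improved estimate \eqref{eqn:condQ}.} For the sharpened $L^2$ bound I would run an Aubin--Nitsche duality argument. Fix $g\in L^2(\Omega)$ and let $\phi\in H_0^1(\Omega)$ solve $-\Delta\phi=g$ weakly. Testing the weak form with $Q_h\chi\in X_h$ and using the Ritz projection gives $(Q_h\chi,g)=(\nabla\phi,\nabla Q_h\chi)=(\nabla R_h\phi,\nabla Q_h\chi)=\epsilon_h(\chi,R_h\phi)$, the middle equality because $(\nabla(\phi-R_h\phi),\nabla Q_h\chi)=0$ by the definition of $R_h$ and the last by \eqref{eqn:Q}. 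It then remains to bound $\epsilon_h(\chi,R_h\phi)$ by $c h^2\|\chi\|_{L^2(\Omega)}\|g\|_{L^2(\Omega)}$, and this is exactly where symmetry enters: on a symmetric mesh the leading $O(h^2)$ contributions of $\epsilon_K$ cancel between symmetrically placed elements (formally, after a summation by parts the leading term is recast as interface-jump contributions that telescope), yielding the sharpened estimate $|\epsilon_h(\chi,\psi)|\le c h^2\|\chi\|_{L^2(\Omega)}\|\nabla\psi\|_{L^2(\Omega)}$. Applying it with $\psi=R_h\phi$ and using the $H^1$-stability of $R_h$ together with $\|\nabla\phi\|_{L^2(\Omega)}\le c\|g\|_{L^2(\Omega)}$ gives $(Q_h\chi,g)\le c h^2\|\chi\|_{L^2(\Omega)}\|g\|_{L^2(\Omega)}$, and taking the supremum over $g$ yields \eqref{eqn:condQ}.

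\emph{Main obstacle.} The delicate step is the last one: establishing the superconvergence-type quadrature estimate that genuinely requires mesh symmetry. The generic bound only supplies $h\|\nabla\chi\|_{L^2(\Omega)}$ where we need $\|\chi\|_{L^2(\Omega)}$, so the extra power of $h$ cannot come from interpolation alone and must be extracted from an exact cancellation. Making this precise---identifying the symmetry hypothesis on $\mathcal{T}_h$ under which the first-order terms vanish and controlling the residual interface terms via trace and inverse estimates---is the technical heart of the argument; everything else is routine bookkeeping with interpolation and inverse inequalities.
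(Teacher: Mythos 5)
Your argument for the first inequality is correct, and it is essentially the standard one (the paper itself gives no proof of this lemma, deferring instead to \cite[Lemma 2.4]{chatzipa-l-thomee12} and \cite[Lemma 2.2]{ChatzLazarovThomee:2013}, where exactly this argument appears): the elementwise Bramble--Hilbert/scaling bound $|\epsilon_K(\chi,\psi)|\le ch_K^2\|\nabla\chi\|_{L^2(K)}\|\nabla\psi\|_{L^2(K)}$, an inverse estimate to trade one gradient for $h^{-1}$, the choice $\psi=Q_h\chi$ in \eqref{eqn:Q}, and the bound $\|A_hw\|_{L^2(\Omega)}\le ch^{-1}\|\nabla w\|_{L^2(\Omega)}$ (valid on $X_h$ by the equivalence of the norm induced by $[\cdot,\cdot]$ with the $L^2(\Omega)$-norm) together give both terms for $p=0,1$.

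The second part, however, contains a genuine gap: the ``sharpened estimate'' you propose to derive from mesh symmetry, namely $|\epsilon_h(\chi,\psi)|\le ch^2\|\chi\|_{L^2(\Omega)}\|\nabla\psi\|_{L^2(\Omega)}$ for \emph{all} $\chi,\psi\in X_h$, is false even on perfectly symmetric meshes. Take $d=1$ and the uniform mesh on $(0,1)$ (which is symmetric): the vertex rule is the trapezoidal rule, and a direct computation gives the exact identity $\epsilon_h(\chi,\psi)=\tfrac{h^2}{6}(\chi',\psi')_{L^2(0,1)}$. Choosing $\chi=\psi$ to be the oscillating function with nodal values $(-1)^j$ (and zero at the boundary nodes) yields $\epsilon_h(\chi,\chi)=\tfrac{h^2}{6}\|\chi'\|_{L^2}^2\approx \tfrac{2}{3}$, whereas $h^2\|\chi\|_{L^2}\|\chi'\|_{L^2}\approx \tfrac{2}{\sqrt{3}}h\to 0$. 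The obstruction is structural: for a generic $\psi\in X_h$ the jumps of $\nabla\psi$ across interelement faces are $O(1)$ relative to $\|\nabla\psi\|_{L^2}$, so your summation-by-parts can never produce the extra power of $h$ from $\psi$ alone. What rescues the (correct) duality reduction $(Q_h\chi,g)=\epsilon_h(\chi,R_h\phi)$ is that the second argument is not arbitrary: $\phi$ solves $-\Delta\phi=g$ and, by convexity of $\Omega$, enjoys $\|\phi\|_{H^2(\Omega)}\le c\|g\|_{L^2(\Omega)}$ --- note you only invoked $\|\nabla\phi\|_{L^2}\le c\|g\|_{L^2}$, which cannot suffice. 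The repair, which is how the cited proof proceeds, is to split $R_h\phi=I_h\phi+(R_h\phi-I_h\phi)$ with $I_h$ the nodal interpolant: the second piece is handled by your generic bound $|\epsilon_h(\chi,w)|\le ch\|\chi\|_{L^2}\|\nabla w\|_{L^2}$ together with $\|\nabla(R_h\phi-I_h\phi)\|_{L^2}\le ch\|\phi\|_{H^2}$, while for $\epsilon_h(\chi,I_h\phi)$ the summation by parts on a symmetric mesh produces symmetric second differences of $\phi$ itself (not of a finite element function), which are $O(h^2)$ in the averaged sense controlled by $\|\phi\|_{H^2(\Omega)}$. In short, the superconvergence is a joint property of mesh symmetry and the $H^2$-regularity of the dual solution; attributing it to the bilinear form $\epsilon_h$ on $X_h\times X_h$ alone, as your outline does, is precisely the step that fails.
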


\def\luh{\bar u_h}

\begin{theorem}\label{lumped-mass-nonsmooth}
Let $u$ be the solution of problem \eqref{eqn:pde} and  $\luh$ be the solution of \eqref{eqn:fem-lumped}
or \eqref{eqn:fem-fvem}, respectively. Then under condition \eqref{eqn:condQ}, the following
estimates are valid for $t >0$ and  $\ell_h=|\ln h|$.
\begin{itemize}
\item[(i)] If $f\equiv0$, $v\in L^2(\Omega)$ and $v_h=P_hv$, then
\begin{equation}\label{L2-improved}
   \|\luh(t)-u(t)\|_{L^2(\Omega)}\leq ch^2 \ell_h t^{-\al} \|v\|_{L^2(\Omega)}.
\end{equation}
\item[(ii)] If $v\equiv0$, $f\in L^\infty(0,T;\dH q)$, $-1<q\leq0$, and $f_h=P_hf$, then
\begin{equation*}
 \|\luh(t) - u(t) \|_{L^2(\Omega)}  \le ch^{2+q} \ell_h^{2} \|f\|_{L^\infty(0,t;\dH q)}.
\end{equation*}
\end{itemize}
\end{theorem}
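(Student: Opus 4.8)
The plan is to reduce both estimates to the already-established standard Galerkin bounds of Theorem~\ref{thm:error-fem} plus a quadrature error contribution, the latter controlled through the operator $Q_h$ of \eqref{eqn:Q} and condition \eqref{eqn:condQ}. Write $A_h=-\bDelh$ (the argument for $-\widetilde\Delta_h$ is identical), and let $u_h$ denote the standard Galerkin solution of \eqref{fem-operator} with the same data. The starting point is an algebraic identity relating the two discrete Laplacians: testing the definitions of $\bDelh$ and $\Delta_h$ against $\chi\in X_h$, so that $(\Delta_h\psi,\chi)=(\bDelh\psi,\chi)_h$, and then inserting $(\cdot,\cdot)_h-(\cdot,\cdot)=\epsilon_h(\cdot,\cdot)=(\nabla Q_h\cdot,\nabla\cdot)$ from \eqref{eqn:Q}, one finds
\begin{equation*}
  \bDelh-\Delta_h=\Delta_h Q_h\bDelh\qquad\text{on }X_h.
\end{equation*}
This is the only place where the structure of the approximate inner product enters; everything else is driven by the smoothing properties of the solution operators and by the gain $\|Q_h\chi\|_{L^2(\Omega)}\le ch^2\|\chi\|_{L^2(\Omega)}$ from \eqref{eqn:condQ}.

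For part (i) I would split $\luh-u=(\luh-u_h)+(u_h-u)$ and bound the second term directly by Theorem~\ref{thm:error-fem}(i). For $\vth:=\luh-u_h$, subtracting the two semidiscrete equations (the common source cancels) and inserting the identity above gives
\begin{equation*}
  \Dal\vth-\Delta_h\vth=\Delta_h Q_h\bDelh\luh,\qquad\vth(0)=0,
\end{equation*}
so that Duhamel's formula \eqref{Duhamel_o} yields $\vth(t)=\int_0^t E_h(t-s)\Delta_h Q_h\bDelh\luh(s)\,\d s$. Exactly as in the proof of Theorem~\ref{thm:error-fem}, I would split $\Delta_h=(-\Delta_h)^{1-\epsilon}(-\Delta_h)^{\epsilon}$, use the smoothing bound $\|(-\Delta_h)^{1-\epsilon}E_h(r)\|\le cr^{\epsilon\alpha-1}$, the inverse inequality $\|(-\Delta_h)^{\epsilon}w\|\le ch^{-2\epsilon}\|w\|$, then \eqref{eqn:condQ}, and finally the discrete smoothing estimate $\|\bDelh\luh(s)\|_{L^2(\Omega)}\le cs^{-\alpha}\|v\|_{L^2(\Omega)}$ (the discrete analogue of Theorem~\ref{thm:reg-u}(i), which follows from the resolvent estimate for $\bDelh$). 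This produces the convolution integral $\int_0^t(t-s)^{\epsilon\alpha-1}s^{-\alpha}\,\d s$, bounded by $c\,\epsilon^{-1}t^{\epsilon\alpha-\alpha}$, whence $\|\vth(t)\|_{L^2(\Omega)}\le c\epsilon^{-1}h^{2-2\epsilon}t^{\epsilon\alpha}t^{-\alpha}\|v\|_{L^2(\Omega)}$. Choosing $\epsilon=1/\ell_h$ renders $h^{-2\epsilon}$ and $t^{\epsilon\alpha}$ bounded and $\epsilon^{-1}=\ell_h$, which gives \eqref{L2-improved}.

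Part (ii) follows the same template with $v\equiv0$. The term $u_h-u$ is bounded by the $\dH q$ analogue of Theorem~\ref{thm:error-fem}(iii), derived by the same duality-and-smoothing argument and carrying the factor $h^{2+q}\ell_h^2$. The quadrature error $\vth=\luh-u_h$ again satisfies $\Dal\vth-\Delta_h\vth=\Delta_h Q_h\bDelh\luh$ with $\vth(0)=0$. The difficulty is that now $\bDelh\luh$ is rough: for $f\in\dH q$ with $q<0$ it is not uniformly bounded in $L^2(\Omega)$, so the naive estimate of part (i) fails. Here I would pass to the Laplace domain, where $\hat\vth(z)=(z^\alpha-\bDelh)^{-1}\Delta_h Q_h\bDelh(z^\alpha-\Delta_h)^{-1}P_h\hat f(z)$, and distribute fractional powers of $-\Delta_h$ so as to (a) keep the resolvent factors integrable along $\Gamma_{\theta,\delta}$, (b) extract the gain $ch^2$ from $Q_h$, and (c) move a nonpositive power $(-\Delta_h)^{q/2}$ onto $P_h\hat f$ to realize $\|f\|_{\dH q}$, at the cost of an inverse-estimate factor $h^{q}$. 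Balancing the residual $(-\Delta_h)^{\epsilon}$ with $\epsilon=1/\ell_h$ and inverting the Laplace transform then delivers the claimed $h^{2+q}\ell_h^2$ bound.

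The main obstacle is precisely this roughness bookkeeping in part (ii): arranging the powers of $-\Delta_h$ so that the $\dH q$ regularity of $f$, the $O(h^2)$ smallness of $Q_h$, the inverse inequality, and the (individually non-integrable-looking) discrete smoothing combine into a single estimate of the correct order $h^{2+q}$. A secondary prerequisite, to be dispatched first, is the uniform-in-$h$ sectorial resolvent estimate \eqref{eqn:resol} for $\bDelh$ and $\widetilde\Delta_h$; for the finite volume operator $\widetilde\Delta_h$ this rests on the symmetry of $\langle\cdot,\cdot\rangle$, which is exactly what the symmetric-mesh hypothesis underlying \eqref{eqn:condQ} guarantees, whereas on general meshes $\widetilde\Delta_h$ need not be self-adjoint and the argument would require modification.
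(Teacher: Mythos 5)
Your strategy for part (i) coincides with the paper's own proof in all essentials: the same splitting $\luh-u=(\luh-u_h)+(u_h-u)$, the Galerkin part absorbed by Theorem \ref{thm:error-fem}(i), and the quadrature part controlled by $Q_h$, condition \eqref{eqn:condQ}, the smoothing/inverse-estimate pair $\|(-\Delta_h)^{1-\epsilon}E_h(r)\|\le cr^{\epsilon\alpha-1}$, $\|(-\Delta_h)^{\epsilon}w\|_{L^2(\Omega)}\le ch^{-2\epsilon}\|w\|_{L^2(\Omega)}$, and the choice $\epsilon=1/\ell_h$. The only structural difference is where the quadrature error sits: the paper writes the perturbation equation as $\Dal \delta + A_h\delta = -A_hQ_h\Dal u_h$ with $A_h=-\bDelh$, i.e., the error is attached to the time derivative of the Galerkin solution and the evolution is governed by the lumped-mass operator, whereas your identity $\bDelh-\Delta_h=\Delta_hQ_h\bDelh$ (which is correct) attaches it to the elliptic term, $\Dal\vth-\Delta_h\vth=\Delta_hQ_h\bDelh\luh$, with evolution governed by the Galerkin operator. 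Since $\Dal u_h=\Delta_h u_h$ when $f\equiv0$, these are mirror images: the two versions merely exchange which operator needs Duhamel's formula and which needs the discrete smoothing estimate, and both rest on the uniform-in-$h$ resolvent estimate for $\Delta_h$ and for $\bDelh$ (or $\widetilde\Delta_h$). For part (ii) the paper offers no proof at all (it only cites the literature), so your Laplace-domain plan --- the $h^{2}$ gain from $Q_h$, the $h^{q}$ loss from an inverse estimate after shifting a negative fractional power onto $P_h\hat f$, and logarithmic factors from $\epsilon$-balancing --- goes beyond the text; it is in fact the mechanism used in the cited references, so the plan is sound even though you leave it as a sketch.

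Two corrections of detail. First, with your error equation the Laplace representation must read $\hat\vth(z)=(z^\alpha-\Delta_h)^{-1}\Delta_hQ_h\bDelh(z^\alpha-\bDelh)^{-1}P_h\hat f(z)$: you have the two resolvents interchanged. The slip is harmless for the estimates (the strategy treats the two factors symmetrically), but as written your formula does not follow from your own equation, and it matters for step (c) of your plan --- the fractional power moved onto the data should be a power of $\bDelh$ (the operator whose resolvent is adjacent to $P_h\hat f$), since $\Delta_h$ and $\bDelh$ do not commute. Second, your closing caveat overstates the role of mesh symmetry: the FVE bilinear form $\langle\chi,\psi\rangle=(\chi,J_h\psi)$ is symmetric and positive definite on $X_h$ for general meshes (an elementwise identity in the cited FVE literature), so $\widetilde\Delta_h$ is always self-adjoint with respect to it and the sectorial resolvent estimate needs no mesh symmetry; the symmetric-mesh hypothesis enters only through the quadrature bound \eqref{eqn:condQ} itself, exactly as Lemma \ref{lem:Q} states.
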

\begin{proof}
We only sketch the proof for part (i). For the analysis, we split the error $\luh(t)-u(t)$ into
\begin{equation*}
 \luh(t)-u(t) = u_h(t)- u(t) + \delta(t)
\end{equation*}
with $  \delta(t) = \luh(t)-u_h(t)$ and $u_h(t)$ being the standard Galerkin FEM solution. Upon
noting Theorem \ref{thm:error-fem} for $\|u_h -u\|_{L^2(\Omega)}$, it suffices to show
\begin{equation*}
  \|\delta(t)\|_{L^2(\Omega)} \leq ch^2 \ell_h t^{-\alpha} \|v\|_{L^2(\Omega)}.
\end{equation*}
It follows from the definitions of $u_h(t)$, $\luh(t)$, and $Q_h$ that
\begin{equation*}
  \Dal \delta(t) + A_h \delta(t) = -A_h Q_h\Dal u_h(t) \quad \mbox{ for } t > 0, \quad \mbox{with }\delta(0)=0,
\end{equation*}
where the operator $A_h$ denotes either $-\bDelh$ or $-\widetilde \Delta_h$.
By Duhamel's principle \eqref{Duhamel_o}, $\delta(t)$ can be represented by
\begin{align*}
    \delta(t) &= - \int_0^t E_h (t-s) A_h Q_h \Dal u_h(s){\d s}\\
      & = - \int_0^t A_h^{1-\epsilon}E_h (t-s) A_h^{\epsilon} Q_h \Dal u_h(s){\d s}.
\end{align*}
Then the smoothing property of $E_h$, the inverse estimate and the quadrature error assumption \eqref{eqn:condQ} imply
\begin{align*}
 \| \delta(t) \|_{L^2(\Omega)} & \le \int_0^t \|E_h (t-s)A_h^{1-\epsilon} \|   \| A_h^{\epsilon}Q_h \Dal u_h(s) \|_{L^2(\Omega)}{\d s}\\
  &\le c h^{-2\epsilon}\int_0^t  (t-s)^{\epsilon\alpha-1}    \|  Q_h \Dal u_h(s) \|_{L^2(\Omega)}{\d s} \\
 &\le  c h^{2-2\epsilon}\int_0^t  (t-s)^{\epsilon\alpha-1}    \| \Dal u_h(s) \|_{L^2(\Omega)}{\d s}.
\end{align*}
Last, the (discrete) stability result $\|\Dal u_h(t)\|_{L^2(\Omega)}\leq ct^{-\alpha}\|v_h\|_{L^2(\Omega)}$
(which follows analogously as Theorem \ref{thm:reg-u}(i)) and the $L^2(\Omega)$-stability of $P_h$
imply
\begin{equation*}
\begin{split}
 \| \delta(t) \|_{L^2(\Omega)}
 &\le  c h^{2-2\epsilon}\int_0^t  (t-s)^{\epsilon\alpha-1} s^{-\alpha}   \|   u_h(0) \|_{L^2(\Omega)}{\d s} \\
 &\le c \epsilon^{-1} h^{2-2\epsilon} t^{-\alpha} \| v_h \|_{L^2(\Omega)} \le  c \epsilon^{-1} h^{2-2\epsilon} t^{-\alpha} \| v \|_{L^2(\Omega)}.
\end{split}
\end{equation*}
Then the desired assertion follows immediately by choosing $\epsilon=1/\ell_h$.
\end{proof}

\begin{remark}
The quadrature error condition \eqref{eqn:condQ} is satisfied for symmetric meshes \cite[Section 5]{chatzipa-l-thomee12}.
If condition \eqref{eqn:condQ} does not hold, we are able to show only a suboptimal $O(h)$-convergence rate for $L^2(\Omega)$-norm
of the error \cite[Theorem 4.5]{JinLazarovZhou:SIAM2013}, which is reminiscent of that in the classical parabolic case, e.g.
\cite[Theorem 4.4]{chatzipa-l-thomee12}.
\end{remark}

Generally, the FEM analysis in the fractional case is much more delicate than the standard parabolic
case due to the less standard solution operators. Nonetheless, the results in the two cases are largely
comparable, and the overall proof strategy is often similar. The Laplace approach described above represents
only one way to analyze the spatially semidiscrete schemes. Recently, Karaa \cite{Karaa:2017} gave a
unified analysis of all three methods for the homogeneous problem based on an energy argument, which
generalizes the corresponding technique for standard parabolic problems in \cite[Chapter 3]{Thomee:2006}.
However, the analysis of the inhomogeneous case is still missing. The energy type argument is generally
more tricky in the fractional case. This is due to the nonlocality of the fractional derivative
$\partial_t^\alpha u$ and consequently that many powerful PDE tools, like integration by parts formula and product rule, are
either invalid or require substantial modification. See also \cite{PaniKaraa:2018} for some results on a related subdiffusion model.

\subsection{Illustrations and outstanding issues on semidiscrete methods}

Now we illustrate the three semidiscrete methods with very weak initial data.
\begin{example}\label{exam:fem}
Consider problem \eqref{eqn:pde} on the unit square $\Omega=(0,1)^2$ with $f=0$ and very weak initial data $v=\delta_\Gamma $,
with $\Gamma$ being the boundary of the square $[\frac14,\frac34]\times[\frac14,\frac34]$ with
$\langle \delta_\Gamma,\phi\rangle = \int_\Gamma \phi(s) \d s$. One may view $(v,\chi)$ for $\chi \in X_h \subset
\dot H^{\frac12+\epsilon}(\Omega)$ as duality pairing between the spaces
$H^{-\frac12-\epsilon}(\Omega)$ and $\dot H^{\frac12+\epsilon}(\Omega)$ for any $\epsilon >0$
so that $\delta_\Gamma \in H^{-\frac12-\epsilon}(\Omega)$.
Indeed, it follows from H\"{o}lder's inequality and trace theorem \cite{AdamsFournier:2003} that
\begin{equation*}
   \|\delta_\Gamma\|_{H^{-\frac{1}{2}-\epsilon}(\Omega)}= \sup_{\phi \in \dot H^{\frac12+\epsilon}(\Omega)}
   \frac{|\int_\Gamma \phi(s)\d s|}{\|\phi\|_{\frac12+\epsilon,\Omega}}
   \le  |\Gamma|^\frac12 \sup_{\phi \in \dot H^{\frac12+\epsilon} (\Omega)}\frac{\|\phi\|_{L^2(\Gamma)}}{\|\phi\|_{\frac12+\epsilon,\Omega}}.
 \end{equation*}
 \end{example}
The empirical convergence rate for the very weak data $\delta_\Gamma$ agrees well with the theoretically
predicted convergence rate in Remark \ref{rmk:fem-err}; see Tables \ref{tab:Galerkinweak} and
\ref{tab:Deltafun} for the standard Galerkin method and lumped mass method, respectively. In the
tables, the numbers in the bracket in the last column refer to the theoretical
rate. Interestingly, for the standard Galerkin scheme, the $L^2(\Omega)$-norm of the error exhibits
super-convergence. This is attributed to the fact that the singularity of the solution is
supported on the interface $\Gamma$ and it is aligned with the mesh. It is observed that for both the
standard Galerkin method and lumped mass FEM, the error increases as the time $t\to0^+$, which concurs with the
weak solution singularity at the initial time.

\begin{table}[h]
\caption{The errors $\|u(t)-u_h(t)\|_{L^2(\Omega)}$ of the Galerkin approximation $u_h(t)$ for
Example \ref{exam:fem} with $\al=0.5$, at $t=0.1, 0.01, 0.001$, discretized on a uniform mesh, $h = 2^{-k}$.}
\label{tab:Galerkinweak}
\begin{center}
     \begin{tabular}{|c|cccccc|}
     \hline
     $k$& $1/8$ & $1/16$ &$1/32$ &$1/64$ & $1/128$ &  rate\\
     \hline
     $t=0.001$& 5.37e-2 & 1.56e-2 & 4.40e-3 & 1.23e-3 & 3.41e-4 & $\approx 1.84$ ($1.50$)\\
     $t=0.01$  & 2.26e-2 & 6.20e-3 & 1.67e-3 & 4.46e-4 & 1.19e-4 & $\approx 1.90$ ($1.50$)\\
     $t=0.1$  & 8.33e-3 & 2.23e-3 & 5.90e-3 & 1.55e-3 & 4.10e-4 & $\approx 1.91$ ($1.50$)\\
     \hline
     \end{tabular}
\end{center}
\end{table}

\begin{table}[h]
\caption{The errors $\|u(t)-\luh(t)\|_{L^2(\Omega)}$ of the lumped mass approximation $\luh(t)$
for Example \ref{exam:fem} with $\al=0.5$, at $t=0.1, 0.01, 0.001$, discretized on a uniform mesh, $h = 2^{-k}$}
\label{tab:Deltafun}
\begin{center}
     \begin{tabular}{|c|cccccc|c|}
     \hline
     $k$& $3$ & $4$ &$5$ &$6$ & $7$ &  rate \\
   \hline
     $t=0.001$  & 1.98e-1 & 7.95e-2 & 3.00e-2 & 1.09e-2 & 3.95e-3 & $\approx 1.51$ (1.50)\\
    $t=0.01$  & 6.61e-2 & 2.56e-2& 9.51e-3 & 3.47e-3  & 1.25e-3 & $\approx 1.52$ (1.50)\\
      $t=0.1$  & 2.15e-2 & 8.13e-3 & 3.01e-3 & 1.09e-3 & 3.95e-4&$\approx 1.52$ (1.50)\\
\hline
     \end{tabular}
\end{center}
\end{table}

We end this section with some research problems. Despite the maturity of the FEM analysis,
there are still a few interesting questions on the FEMs for the model \eqref{eqn:pde} which are not well understood:
\begin{itemize}
\item[(i)] So far the analysis is mostly concerned with a time-independent coefficient, which can be treated
conveniently using the semigroup type techniques. The time dependent case requires different techniques, and the nonlocality
of the operator $\partial_t^\alpha u$ prevents a straightforward adaptation of known techniques for standard
parabolic problems  \cite{LuskinRannacher:1982}. Encouraging results in this direction using an energy
argument were established in the recent work of Mustapha \cite{Mustapha:2017},
where error estimates for the homogeneous problem were obtained.
\item[(ii)] All existing works focus on linear finite elements, and there seems no study on high-order finite
elements for nonsmooth data. It is unclear whether there are similar nonsmooth data estimates, as in the parabolic
case \cite[Chapter 3]{Thomee:2006} (see, e.g., \cite[p. 397]{QuarteroniValli:1994} for smooth data). This problem is
interesting in view of the limited smoothing property of the solution operators in space in Theorem \ref{thm:reg-u},
which has played a major role in the analysis. Thus it is of interest to develop and analyze high-order schemes in space.
\item[(iii)]The study on nonlinear subdiffusion models is rather limited, and there seems no error estimate
with respect to the data regularity, especially for nonsmooth problem data. The recent progress \cite{KovacsLiLubich:2016,
JinLiZhou:2018nm} in discrete maximal $\ell^p$ regularity results may provide useful tools for this purpose, which have proven
extremely powerful for the study of nonlinear parabolic problems. One outstanding issue seems to be sharp regularity
estimates for general problem data.
\end{itemize}

\section{Fully discrete schemes by time-stepping}\label{sec:time-stepping}
One outstanding challenge for solving the subdiffusion model lies in the accurate and efficient discretization of the
fractional derivative $\partial_t^\alpha u$. Roughly speaking, there are two predominant groups of
numerical methods for time stepping, i.e., convolution quadrature and finite difference type
methods, e.g., L1 scheme and L1-2 scheme. The former relies on approximating the (Riemann-Liouville)
fractional derivative in the Laplace domain (i.e., symbol), whereas the latter approximates the
Caputo derivative directly by piecewise polynomials. These two approaches have their pros and
cons:  convolution quadrature (CQ) is quite flexible and often much easier to analyze, since by construction, it inherits
excellent numerical stability property of the underlying schemes for ODEs, but it is often restricted
to uniform grids. The finite difference type  methods are very flexible in construction and implementation
and can easily generalize to nonuniform grids, but often challenging to analyze.
Generally, these schemes are only first-order accurate when implemented straightforwardly,
unless restrictive compatibility conditions are fulfilled. Hence, suitable corrections to the
straightforward implementation are needed in order to restore the desired high-order convergence.

In this section, we review these two popular classes of time-stepping schemes on uniform grids.
Specifically, let $\{t_n=n\tau\}_{n=0}^N$ be a uniform partition of the time interval $[0,T]$,
with a time step size $\tau=T/N$. The case of general nonuniform time grids is also of interest,
e.g., in resolving initial or interior layer, but the analysis seems not well understood at present; we refer
interested readers to the references \cite{Kopteva:2017,
LiaoLiZhang:2018,StynesORiordanGracia:2017,ZhangSunLiao:2014} for some recent progress on nonuniform grids.

\subsection{Convolution quadrature}
Convolution quadrature (CQ) was first proposed by Lubich in a series of works \cite{Lubich:1986,
Lubich:1988,Lubich:2004} for discretizing Volterra integral equations. It has been widely
applied in discretizing the Riemann-Liouville fractional derivative (see, e.g., \cite{YusteAcedo:2005,
ZengLiLiuTurner:2013,JinLazarovZhou:SISC2016}). One distinct feature is that the construction
requires only that Laplace transform of the kernel be known. Specfically, the CQ approximates
the Riemann-Liouville derivative $\partial_t^\alpha \varphi(t_n)$, which is defined by
\begin{equation*}
   ^R\partial_t^\alpha \varphi := \frac{\d}{\d t}\frac{1}{\Gamma(1-\alpha)}\int_0^t(t-s)^{-\alpha}\varphi(s)\d s
\end{equation*}
(with $\varphi(0)=0$) by a discrete convolution  (with the shorthand notation $\varphi^n=\varphi(t_n)$)
\begin{equation}\label{eqn:CQ}
\bar\partial_\tau^\alpha \varphi^n:=\frac{1}{\tau^\alpha}\sum_{j=0}^n b_j \varphi^{n-j}.
\end{equation}
The weights $\{b_j\}_{j=0}^\infty$ are the coefficients in the power series expansion
\begin{equation}\label{eqn:delta}
\delta_\tau(\zeta)^\alpha=\frac{1}{\tau^\alpha}\sum_{j=0}^\infty b_j\zeta ^j.
\end{equation}
where $\delta_\tau(\zeta)=\delta(\zeta)/\tau$ is the characteristic polynomial of a linear multistep method
for ODEs, with $\delta(\zeta)=\delta_1(\zeta)$. There are several possible choices of the characteristic
polynomial, e.g., backward differentiation formula, trapezoidal rule, Newton-Gregory method and Runge-Kutta methods. The
most popular one is the backward differentiation formula of order $k$ (BDF$k$), $k=1,\ldots,6$, for which $\delta(\zeta)$
is given by
\begin{equation*}
  \delta_\tau(\zeta ):=\frac{1}{\tau}\sum_{j=1}^k \frac {1}{j} (1-\zeta )^j,\quad j=1,2,\ldots.
\end{equation*}
The special case $k=1$, i.e., the backward Euler convolution quadrature, is commonly known as
Gr\"{u}nwald-Letnikov approximation in the literature and the coefficients $b_j$ are given explicitly by
the following recurrence relation
\begin{equation*}
  b_0 = 1, \quad b_{j}=-\frac{\alpha-j+1}{j}b_{j-1}.
\end{equation*}
Generally, the weights $b_j$ can be evaluated efficiently via recursion or discrete
Fourier transform \cite{Podlubny:1999,Sousa:2012}.

The CQ discretization first reformulates problem \eqref{eqn:pde} by the Riemann-Liouville derivative
$^R\partial_t^\alpha \varphi$ using the defining relation for the Caputo derivatives \cite[p. 91]{KilbasSrivastavaTrujillo:2006}:
$\partial_t^\alpha \varphi(t) = {^R\partial_t^\alpha}(\varphi-\varphi(0))$ into the form
\begin{equation*}
  ^R\partial_t^\alpha(u-v) - \Delta u = f.
\end{equation*}
The time stepping scheme based on the CQ for problem \eqref{eqn:pde} is to seek
approximations $U^n$, $n=1,\dots,N$, to the exact solution $u(t_n)$ by
\begin{equation}\label{eqn:BDF-CQ-0}
\bPtau^\alpha (U-v)^n  -  \Delta U^n = f(t_n) .
\end{equation}
It can be combined with space semidiscrete schemes described in Section \ref{sec:fem} to arrive at fully discrete
schemes, which are implementable. Our discussions below focus on the temporal
error for time-stepping schemes, and omit the space discretization in this part.

If the exact solution $u$ is smooth and has sufficiently many vanishing derivatives at $t=0$, then
the approximation $U^n$ converges at a rate of $O(\tau^k)$ \cite[Theorem 3.1]{Lubich:1988}. However, it
generally only exhibits a first-order accuracy when solving fractional evolution equations
 even for smooth $v$ and  $f$ \cite{CuestaLubichPalencia:2006,JinLazarovZhou:SISC2016}.
This loss of accuracy is one distinct feature for most time stepping schemes, since they are
usually derived under the assumption that the solution $u$ is sufficiently smooth, which holds
only if the problem data satisfy certain rather restrictive compatibility
conditions. In brevity, they tend to lack {robustness} with respect to the regularity of problem data.

This observation on accuracy loss has motivated some research works. For fractional ODEs, one idea
is to use starting weights \cite{Lubich:1986} to correct the CQ in discretizing $\partial_t^\alpha \varphi(t_n)$ by
\begin{equation*}
  \bar\partial_\tau^\alpha\varphi^n  = \tau^{-\alpha} \sum_{j=0}^nb_{n-j}\varphi^j + \sum_{j=0}^Mw_{n,j}\varphi^j,
\end{equation*}
where $M\in\mathbb{N}$ and the weights $w_{n,j}$ depend on $\alpha$ and $k$. The purpose of the starting term
$\sum_{j=0}^Mw_{n,j}\varphi^j $ is to capture all leading singularities so as to recover a uniform $O(\tau^k)$
rate. The weights $w_{n,j}$ have to be computed at every time step, which involves solving a linear
system with Vandermonde type matrices and may lead to instability issue (if a large $M$ is needed,
which is likely the case when $\alpha$ is close to zero). This idea works well for fractional ODEs; however,
its extension to fractional PDEs essentially seems to boil down to expanding the solution into (fractional-order)
power series in $t$, which would impose certain strong compatibility conditions on the source $f$.

The more promising idea for the model \eqref{eqn:pde} is initial correction. It corrects only
the first few steps of the schemes. This idea was first developed in \cite{LubichSloanThomee:1996}
for an integro-differential equation. Then it was applied as an abstract framework in \cite{CuestaLubichPalencia:2006}
for BDF2 in order to achieve a uniform second-order convergence for semilinear fractional
diffusion-wave equations (which is slightly different from the model \eqref{eqn:pde}) with smooth data.
Further, BDF2 CQ was extended to subdiffusion and diffusion wave equations in \cite{JinLazarovZhou:SISC2016}
and very recently also general BDF$k$ \cite{JinLiZhou:2017sisc}. In the latter work \cite{JinLiZhou:2017sisc},
by careful analysis of the error representation in the Laplace
domain, a set of simple algebraic criteria was derived.
Below we describe the correction scheme for the BDF CQ derived in \cite{JinLiZhou:2017sisc}.

To restore the $k^{\rm th}$-order accuracy for BDF$k$ CQ, we correct it at the starting $k-1$
steps by (as usual, the summation disappears if the upper index is smaller than the lower one)
\begin{equation}\label{eqn:BDF-CQ}
\left\{
\begin{aligned}
&\bPtau^\alpha (U-v)^n  -\Delta U^n = a_n^{(k)} (\Delta v+f(0))+f(t_n)+
\sum_{\ell=1}^{k-2} b_{\ell,n}^{(k)}\tau^{\ell} \partial_t^{(\ell)}f(0) ,
&&1\le n\le k-1,\\
&\bPtau^\alpha (U-v)^n  - \Delta U^n = f(t_n) , &&k\le n\le N.
\end{aligned}\right.
\end{equation}
where the coefficients $a_n^{(k)}$ and $b_{\ell,n}^{(k)}$ are given in Table \ref{tab:an}. When compared
with the vanilla scheme \eqref{eqn:BDF-CQ-0}, the additional terms are
constructed so as to improve the overall accuracy of the scheme to $O(\tau^k)$ for a general initial data
$v\in D(\Delta)$ and a possibly incompatible right-hand side $f$ \cite{JinLiZhou:2017sisc}. The only difference
between the corrected scheme \eqref{eqn:BDF-CQ} and the standard scheme \eqref{eqn:BDF-CQ-0} lies in the
correction terms at the starting $k-1$ steps for BDF$k$. Hence, the scheme \eqref{eqn:BDF-CQ} is easy to
implement. The correction is also minimal in the sense that there is no other correction scheme which uses
fewer correction steps while attaining the same accuracy. The corrected scheme \eqref{eqn:BDF-CQ} satisfies
the following error estimates \cite[Theorem 2.4]{JinLiZhou:2017sisc}.

\begin{theorem}\label{thm:conv}
Let $f\in C^{k-1}([0,T];L^2(\Omega))$ and $\int_0^t(t-s)^{\alpha-1}\|\partial_s^{(k)}f(s)\|_{L^2(\Omega)}\d s<\infty$. Then
for the solution $U^n$ to \eqref{eqn:BDF-CQ}, the following error estimates hold for any $t_n>0$.
\begin{itemize}
\item[$\rm(i)$] If $  \Delta v\in L^2(\Omega)$, then
\begin{equation*}
  \begin{aligned}
  \|U^n-u(t_n)\|_{L^2(\Omega)}  \leq & c\tau^k \bigg(t_n^{ \alpha -k }  \|f(0)+\Delta v\|_{L^2(\Omega)} + \sum_{\ell=1}^{k-1} t_n^{ \alpha+\ell -k }  \|\partial_t^{(\ell)}f(0)\|_{L^2(\Omega)}\\
  &\ \ +\int_0^{t_n}(t_n-s)^{\alpha-1}\|\partial_s^{(k)}f(s)\|_{L^2(\Omega)}\d s\bigg).
  \end{aligned}
\end{equation*}
\item[$\rm(ii)$] If $v\in L^2(\Omega)$, then
\begin{align*}
  \|U^n-u(t_n)\|_{L^2(\Omega)}  \leq c\tau^k& \bigg( t_n^{-k} \|v\|_{L^2(\Omega)} + \sum_{\ell=0}^{k-1} t_n^{ \alpha+\ell -k }  \|\partial_t^{(\ell)}f(0)\|_{L^2(\Omega)}\\
      &+\int_0^{t_n}(t_n-s)^{\alpha-1}\|\partial_s^{(k)}f(s)\|_{L^2(\Omega)}\d s\bigg).
\end{align*}
\end{itemize}
\end{theorem}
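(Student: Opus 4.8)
The plan is to analyze the error entirely in the Laplace and generating-function domain, following the paradigm of Lubich and the framework in \cite{JinLiZhou:2017sisc}. The starting point is the contour representation \eqref{eqn:Sol-expr-u-const} of the exact solution together with the resolvent bound \eqref{eqn:resol}. The corresponding representation for the fully discrete solution $U^n$ is obtained through its generating function $\tilde U(\zeta):=\sum_{n\ge 0}U^n\zeta^n$. Since the convolution quadrature operator $\bPtau^\alpha$ acts as multiplication by the symbol $\delta_\tau(\zeta)^\alpha$ on generating functions, summing the scheme \eqref{eqn:BDF-CQ} against $\zeta^n$ converts the discrete recursion into the algebraic identity
\begin{equation*}
  \tilde U(\zeta) = \big(\delta_\tau(\zeta)^\alpha - \Delta\big)^{-1}\Big(\delta_\tau(\zeta)^\alpha\,\frac{v}{1-\zeta} + \tilde f(\zeta) + R(\zeta)\Big),
\end{equation*}
where $R(\zeta)$ collects the finitely many correction contributions from the first $k-1$ steps. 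Recovering $U^n$ by Cauchy's formula on a circle $|\zeta|=e^{-\tau\sigma}$ and substituting $\zeta=e^{-z\tau}$ deforms the discrete contour onto a curve comparable to the truncated Hankel contour $\Gamma_{\theta,\delta}$ used for $u(t_n)$; this is the device that places the two representations on a common footing.

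First I would record the two analytic facts that drive the estimate. For the BDF$k$ symbol one has, uniformly on the relevant contour and for $|z\tau|$ bounded, the consistency expansion $\delta_\tau(e^{-z\tau})=z\,(1+O((z\tau)^k))$, hence $\delta_\tau(e^{-z\tau})^\alpha=z^\alpha(1+O((z\tau)^k))$; moreover $\delta_\tau(e^{-z\tau})$ stays in the sector $\Sigma_{\al\theta}$, so \eqref{eqn:resol} remains applicable with $z^\alpha$ replaced by $\delta_\tau(e^{-z\tau})^\alpha$. These two properties bound $\|(\delta_\tau(e^{-z\tau})^\alpha-\Delta)^{-1}\|$ by $c|z|^{-\alpha}$ and the kernel difference $(\delta_\tau(e^{-z\tau})^\alpha-\Delta)^{-1}-(z^\alpha-\Delta)^{-1}$ by $c\tau^k|z|^{k-\alpha}$. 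Next I would decompose the data via the Taylor expansion $f(t)=\sum_{\ell=0}^{k-1}\frac{t^\ell}{\ell!}\partial_t^{(\ell)}f(0)+\frac{1}{(k-1)!}\int_0^t(t-s)^{k-1}\partial_s^{(k)}f(s)\,\d s$. The contributions of $v$ and of the monomials $t^\ell$ have Laplace transforms that are pure powers of $z$, namely $z^{\alpha-1}v$ and $z^{-\ell-1}\partial_t^{(\ell)}f(0)$; these are precisely the ingredients that interact with the symbol error $O((z\tau)^k)$ to produce spurious low-order terms.

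The central point is that the correction coefficients $a_n^{(k)}$ and $b_{\ell,n}^{(k)}$ in \eqref{eqn:BDF-CQ} are chosen so that $R(\zeta)$ cancels exactly these spurious contributions, i.e.\ the discrete kernel applied to each singular datum matches the continuous one up to an $O(\tau^k)$ remainder. Granting this matching, the residual contour integral for each datum is estimated by the two bounds above through a standard splitting of the deformed contour into the circular arc $|z|\sim t_n^{-1}$ and the two rays. For the combined datum one uses the algebraic identity $(z^\alpha-\Delta)^{-1}(z^{\alpha-1}v+z^{-1}f(0))=z^{-1}\big(v+(z^\alpha-\Delta)^{-1}(\Delta v+f(0))\big)$: the term $z^{-1}v$ is reproduced exactly, while the remainder has error kernel of size $\tau^k|z|^{k-1-\alpha}$, whence the arc--ray estimate yields $t_n^{\alpha-k}\|\Delta v+f(0)\|_{L^2(\Omega)}$ in part (i). In part (ii), where only $v\in L^2(\Omega)$ is available, one instead keeps $v$ against the resolvent directly, giving the kernel $\tau^k|z|^{k-1}$ and hence the bound $t_n^{-k}\|v\|_{L^2(\Omega)}$. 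The singularities $z^{-\ell-1}\partial_t^{(\ell)}f(0)$ produce the terms $t_n^{\alpha+\ell-k}\|\partial_t^{(\ell)}f(0)\|_{L^2(\Omega)}$, and the smooth Taylor remainder is controlled by the pure consistency estimate together with Duhamel's formula, producing the convolution bound $\int_0^{t_n}(t_n-s)^{\alpha-1}\|\partial_s^{(k)}f(s)\|_{L^2(\Omega)}\,\d s$.

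The hard part will be verifying the cancellation property of the correction, namely that $a_n^{(k)}$ and $b_{\ell,n}^{(k)}$ annihilate exactly the leading singular contributions of $v$ and of $\partial_t^{(\ell)}f(0)$ in the generating-function representation. This reduces to a set of algebraic identities matching the Laurent coefficients of the discrete kernel against those of $(z^\alpha-\Delta)^{-1}z^{\alpha-1}$ and $(z^\alpha-\Delta)^{-1}z^{-\ell-1}$ near the origin; it is here that the minimality of the correction must enter, since the mildly singular datum $\partial_t^{(k-1)}f(0)$ already yields $O(\tau^k)$ \emph{without} correction (its contribution $t_n^{\alpha-1}$ is integrable), which is why the sum in \eqref{eqn:BDF-CQ} stops at $\ell=k-2$. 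A secondary technical difficulty is the uniform control of the symbol $\delta_\tau(e^{-z\tau})$ on the deformed contour, in particular ensuring that it remains within the sector of analyticity of the resolvent so that \eqref{eqn:resol} applies; this requires restricting $|z\tau|\le c$ and treating the far part of the contour separately, exploiting the $A(\vartheta)$-stability of BDF$k$ for $k\le 6$.
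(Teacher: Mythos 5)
Your proposal follows essentially the same route as the paper's proof, which is deferred to \cite[Theorem 2.4]{JinLiZhou:2017sisc}: an error representation in the Laplace/generating-function domain, using the consistency and sectoriality of the BDF$k$ symbol $\delta_\tau(e^{-z\tau})$, the Taylor splitting of $f$, the recombination identity turning $z^{\alpha-1}v+z^{-1}f(0)$ into the datum $\Delta v+f(0)$, and correction coefficients $a_n^{(k)}$, $b_{\ell,n}^{(k)}$ characterized by algebraic cancellation criteria for the singular-data contributions. Your outline, including the contour splitting into arc and rays and the observation that the $\ell=k-1$ datum needs no correction, matches that argument, so no further comparison is needed.
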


\begin{remark}
Note that the estimate
depends only on the regularity of $f$ and $v$, rather than the regularity of $u$.
Theorem \ref{thm:conv} implies that for any fixed $t_n>0$, the rate is $O(\tau^k)$ for BDF$k$ CQ.
In order to have a {uniform} $O(\tau^k)$ rate, the following compatibility conditions are needed:
\begin{equation*}
    f(0) + \Delta v = 0 \quad \mbox{and}\quad     \partial_t^{(\ell)} f(0) = 0, \quad \ell=1,\ldots,k-1.
\end{equation*}
Otherwise, the estimate
deteriorates as $t\to0$, in accordance with the regularity theory in Theorem \ref{thm:reg-u}: the solution {\rm(}and its
derivatives{\rm)} exhibits weak singularity at $t=0$.
\end{remark}

\begin{remark}
The case $k=1$ corresponds to the backward Euler CQ, and it
does not require any correction in order to achieve a first-order convergence.
\end{remark}

\begin{table}[htb!]
\caption{The coefficients $a_j^{(k)}$ and $b_{\ell,j}^{(k)}$ \cite[Tables 1 and 2]{JinLiZhou:2017sisc}.}
\label{tab:an}
\centering
     \begin{tabular}{|c|ccccc|}
\hline
       BDF$k$ &  $a_1^{(k)}$  & $a_2^{(k)}$  & $a_3^{(k)}$ & $a_4^{(k)}$ & $a_5^{(k)}$   \\[2pt]
\hline
       $k=2$       & $ \frac{1}{2}$ & & & & \\
\hline
       $k=3$          &$\frac{11}{12}$  & $-\frac{5}{12}$    &   &  &  \\[2pt]
 \hline
       $k=4$          &$\frac{31}{24}$  & $-\frac{7}{6}$  & $\frac{3}{8}$ &  &  \\[2pt]
\hline
       $k=5$          &$\frac{1181}{720}$  & $-\frac{177}{80}$   & $\frac{341}{240}$ & $-\frac{251}{720}$  & \\[2pt]
\hline
       $k=6$          &$\frac{2837}{1440}$& $-\frac{2543}{720}$   &$\frac{17}{5}$ & $-\frac{1201}{720}$ & $\frac{95}{288}$  \\[2pt]
\hline
\end{tabular}

\vspace{0.2cm}

\begin{tabular}{|c c|ccccc|}
\hline
      BDF$k$ &   & $b_{\ell,1}^{(k)}$  & $b_{\ell,2}^{(k)}$  & $b_{\ell,3}^{(k)}$ & $b_{\ell,4}^{(k)}$ & $b_{\ell,5}^{(k)}$     \\[2pt]
\hline
       $k=3$          &$\ell=1$   &$\frac{1}{12}$  &  0   &  &    &  \\[2pt]
\hline
       $k=4$          &$\ell=1$   &$\frac{1}{6}$  & $-\frac{1}{12}$    & $0$   &  &  \\[3pt]
                           &$\ell=2$   & $0$                &  $0$  & $0$   & & \\[2pt]
\hline
       $k=5$          &$\ell=1$   &$\frac{59}{240}$  & $-\frac{29}{120}$   & $\frac{19}{240}$ & $0$ &  \\[3pt]
                           &$\ell=2$   & $\frac{1}{240}$                 & $-\frac{1}{240}$  & $0$ & $0$ &  \\[3pt]
                           &$\ell=3$   &$\frac{1}{720}$ & $0$  & $0$ & $0$ & \\[2pt]
\hline
       $k=6$          &$\ell=1$   &$\frac{77}{240}$& $-\frac{7}{15}$  &$\frac{73}{240}$ & $-\frac{3}{40}$  & 0\\[3pt]
                           &$\ell=2$   & $\frac{1}{96}$             & $-\frac{1}{60}$  & $\frac{1}{160}$  & $0$ & 0 \\[3pt]
                           &$\ell=3$   & $-\frac{1}{360}$ &  $\frac{1}{720}$ & $0$ & $0$  & 0\\[3pt]
                           &$\ell=4$   & $0$ & $0$ & $0$ & $0$  & 0\\[2pt]
\hline
\end{tabular}
\end{table}

In passing, we note that not all CQ schemes require initial correction in
order to recover high-order convergence. One notable example is Runge-Kutta CQ;
see \cite{LubichOstermann:1995} for semilinear parabolic problems and
\cite{Fischer:2018} for the subdiffusion model. Further, a proper weighted average of shifted
standard Grunwald-Letnikov approximations can also lead to high-order approximations
\cite{ChenDeng:2014}. CQ schemes can exhibit superconvergence
at points that may be different from the grid points, which can also be effectively exploited
to develop high-order schemes (see \cite{Dimitrov:2014} for Grunwald-Letnikov formula). However,
the corrected versions of these approximations have not yet been developed for the general case, except for a
fractional variant of Crank-Nicolson scheme \cite{JinLiZhou:2017ima}.

\subsection{Piecewise polynomial interpolation}
Now we describe the time stepping schemes based on piecewise polynomial interpolation. These schemes are
essentially of finite difference nature, and the most prominent one is the L1 scheme. The L1 approximation
of the Caputo derivative ${\Dal} u(t_n)$ is given by \cite[Section 3]{LinXu:2007}
\begin{equation}\label{eqn:$L^1$approx}
   \begin{aligned}
     {\Dal} u(t_n) &= \frac{1}{\Gamma(1-\al)}\sum^{n-1}_{j=0}\int^{t_{j+1}}_{t_j}
        \frac{\partial u(s)}{\partial s} (t_n-s)^{-\al}\, \d s \\
     &\approx \frac{1}{\Gamma(1-\al)}\sum^{n-1}_{j=0} \frac{u(t_{j+1})-u(t_j)}{\tau}\int_{t_j}^{t_{j+1}}(t_n-s)^{-\al}\d s\\
     &=\sum_{j=0}^{n-1}b_j\frac{u(t_{n-j})-u(t_{n-j-1})}{\tau^\alpha}\\
     &=\tau^{-\al} [b_0u(t_n)-b_{n-1}u(t_0)+\sum_{j=1}^{n-1}(b_j-b_{j-1})u(t_{n-j})] =:L_1^n(u),
   \end{aligned}
 \end{equation}
where the weights $b_j$ are given by
\begin{equation*}
b_j=((j+1)^{1-\alpha}-j^{1-\alpha})/\Gamma(2-\al),\ j=0,1,\ldots,N-1.
\end{equation*}
It was shown in \cite[equation (3.3)]{LinXu:2007} and \cite[Lemma 4.1]{SunWu:2006} that the local truncation
error of the L1 approximation is bounded by
\begin{equation}\label{eqn:err-L1}
  |\partial_t^\alpha u(t_n)-L_1^n(u)|\leq c(u)\tau^{2-\alpha},
\end{equation}
where the constant $c(u)$ depends on $\|u\|_{C^2([0,T])}$. Thus, it requires that the solution $u$ be twice
continuously differentiable in time. Since its first appearance, the L1 scheme
has been widely used in practice, and currently it is one of the most popular and successful numerical
methods for solving the model \eqref{eqn:pde}. With the L1 scheme in time,
we arrive at the following time stepping scheme: Given $U^0=v$, find $U^n\in \dH1$ for $n=1,2,\ldots,N$
\begin{equation}\label{eqn:fullyl1}
    L_1^n(U) -\Delta U^n=  f(t_n).
\end{equation}

We have the following temporal error estimate for the scheme \eqref{eqn:fullyl1} \cite{JinLazarovZhou:2016ima,
JinLiZhou:nonlinear}. This is achieved by means of discrete Laplace transform, and it is rather technical,
since the discrete Laplace transform of the weights $b_j$ involves the wieldy polylogarithmic function.
See also \cite{JinZhou:2017} for a different analysis via an energy argument. Formally, the error estimate
is nearly identical with that for the backward Euler CQ.
Thus, in stark contrast to the $O(\tau^{2-\alpha})$ rate expected from the local truncation error \eqref{eqn:err-L1}
for smooth solutions, the L1 scheme is generally only first-order accurate, even for smooth initial data or source term.
\begin{theorem}\label{thm:error_fullyl1-nonsmooth}
Let $u$ and $U^n$ be the solutions of problems \eqref{eqn:pde} and \eqref{eqn:fullyl1},
respectively. Then there holds
\begin{equation*}
  \|u(t_n)-U^n\|_{L^2(\Omega)} \leq c\tau t_n^{\beta\alpha-1}\|(-\Delta)^\beta u_0\|_{L^2(\Omega)}+ c\tau\bigg(t_n^{\alpha-1}\|f(0)\|_{L^2(\Omega)}+\int_0^{t_n}(t_n-s)^{\alpha-1}\|f'(s)\|_{L^2(\Omega)}\d s\bigg).
\end{equation*}
\end{theorem}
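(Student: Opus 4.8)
The plan is to reproduce, at the fully discrete level, the Laplace-transform derivation that leads to the representation \eqref{eqn:Sol-expr-u-const}, and then to estimate the error mode-by-mode by comparing the continuous and discrete contour integrals. First I would recast the L1 operator as a discrete convolution, $L_1^n(U)=\tau^{-\alpha}\sum_{m=1}^{n}b_{n-m}(U^m-U^{m-1})$, and pass to the generating function $\widetilde U(\zeta)=\sum_{n\ge0}U^n\zeta^n$. Applying this transform to the scheme \eqref{eqn:fullyl1} turns it into the algebraic identity
\begin{equation*}
 (\Psi_\tau(\zeta)-\Delta)\widetilde U(\zeta)=\widetilde f(\zeta)+\Big(\tfrac{\Psi_\tau(\zeta)}{1-\zeta}-\Delta\Big)v,\qquad \Psi_\tau(\zeta):=\frac{1-\zeta}{\tau^{\alpha}}\sum_{j=0}^{\infty} b_j\zeta^{j}=\frac{(1-\zeta)^2}{\zeta\,\tau^{\alpha}\Gamma(2-\alpha)}\,\mathrm{Li}_{\alpha-1}(\zeta),
\end{equation*}
where $\mathrm{Li}_s$ is the polylogarithm; the appearance of $\mathrm{Li}_{\alpha-1}$ is the source of the technical difficulty alluded to above. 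Inverting by Cauchy's formula on a circle $|\zeta|=e^{-\sigma\tau}$ and substituting $\zeta=e^{-z\tau}$ then yields a contour representation of $U^n$ over the truncated contour $\Gamma^\tau_{\theta,\delta}:=\{z\in\Gamma_{\theta,\delta}:|\Im z|\le \pi/\tau\}$, whose two kernels are discrete analogues of $z^{\alpha-1}(z^\alpha-\Delta)^{-1}$ (for $v$) and $(z^\alpha-\Delta)^{-1}$ (for $f$).

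The heart of the argument is a pair of symbol estimates. Writing $\zeta=e^{-z\tau}$, I would establish, from the expansion of $\mathrm{Li}_{\alpha-1}(e^{-z\tau})$ near $\zeta=1$ together with uniform control over the rest of $\Gamma^\tau_{\theta,\delta}$: (a) a sectorial bound ensuring $\Psi_\tau(e^{-z\tau})\in\Sigma_{\alpha\theta'}$ and $c_0|z|^\alpha\le|\Psi_\tau(e^{-z\tau})|\le c_1|z|^\alpha$, so that the resolvent estimate \eqref{eqn:resol} applies to $(\Psi_\tau(e^{-z\tau})-\Delta)^{-1}$ uniformly in $\tau$; and (b) the first-order consistency $|\Psi_\tau(e^{-z\tau})-z^\alpha|\le c\tau|z|^{1+\alpha}$ for $|z|\le c/\tau$, together with $|\Phi_\tau(z)-z^{\alpha-1}|\le c\tau|z|^{\alpha}$ for the discrete $v$-prefactor $\Phi_\tau(z)$ (the analogue of $z^{\alpha-1}$ produced by the substitution). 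Estimate (b) is exactly where the first-order (rather than $O(\tau^{2-\alpha})$) rate originates: it reflects the mismatch between the smooth-solution truncation error \eqref{eqn:err-L1} and the genuine weak singularity of $u$ at $t=0$ recorded in Theorem \ref{thm:reg-u}.

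With these in place I would write $u(t_n)-U^n$ as the difference of the continuous and discrete contour integrals and split it into (i) the difference of the integrands on the common part $|\Im z|\le\pi/\tau$, and (ii) the tail $|\Im z|>\pi/\tau$ of the continuous integral, the latter controlled by the exponential factor $e^{zt_n}$ alone. For the initial-data contribution I would work mode-by-mode: for an eigenpair $(\lambda,\varphi)$ the error reduces to $e_n(\lambda)=\tfrac{1}{2\pi\mathrm{i}}\int e^{zt_n}\big(\tfrac{z^{\alpha-1}}{z^\alpha+\lambda}-\tfrac{\Phi_\tau(z)}{\Psi_\tau(z)+\lambda}\big)\,\d z$ plus a tail term, and (a)--(b) give the two endpoint bounds $|e_n(\lambda)|\le c\tau t_n^{-1}$ and $|e_n(\lambda)|\le c\tau t_n^{\alpha-1}\lambda$; interpolating these yields $|e_n(\lambda)|\le c\tau t_n^{\beta\alpha-1}\lambda^{\beta}$, which after summation over modes produces the term $c\tau t_n^{\beta\alpha-1}\|(-\Delta)^\beta u_0\|_{L^2(\Omega)}$. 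For the source term I would use the integration-by-parts identity $\widehat f(z)=z^{-1}f(0)+z^{-1}\widehat{f'}(z)$ to split $\widetilde f$ accordingly, recovering the $\tau t_n^{\alpha-1}\|f(0)\|_{L^2(\Omega)}$ piece from the $f(0)$ part and the convolution $\tau\int_0^{t_n}(t_n-s)^{\alpha-1}\|f'(s)\|_{L^2(\Omega)}\,\d s$ from the $f'$ part.

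The main obstacle is establishing (a)--(b) uniformly over the \emph{entire} truncated contour. Near $\zeta=1$ the polylogarithm expansion gives $\Psi_\tau(e^{-z\tau})=z^\alpha+O(\tau z^{1+\alpha})$ cleanly; the delicate part is the region $|z|\sim 1/\tau$ (equivalently $\zeta$ bounded away from $1$), where one must show by a separate, more intricate analysis that $\Psi_\tau(e^{-z\tau})$ neither leaves the sector $\Sigma_{\alpha\theta'}$ nor degenerates in modulus, so that the discrete resolvent stays uniformly bounded. Controlling $\mathrm{Li}_{\alpha-1}(e^{-z\tau})$ throughout this range, rather than merely via its leading asymptotics, is precisely the wieldy computation that makes the L1 analysis substantially more technical than the corresponding convolution-quadrature argument.
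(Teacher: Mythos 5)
Your proposal is correct and follows essentially the same route as the paper's proof of this theorem (which the paper delegates to its references, describing it exactly as a discrete Laplace transform argument made technical by the polylogarithm generated by the L1 weights): your generating-function identity, the symbol $\Psi_\tau(\zeta)=\frac{(1-\zeta)^2}{\zeta\,\tau^\alpha\Gamma(2-\alpha)}\,\mathrm{Li}_{\alpha-1}(\zeta)$, the sectoriality/consistency estimates on the truncated contour, and the separate treatment of the initial-data term (interpolating the $\lambda$-wise endpoint bounds) and of the source term via $f(t)=f(0)+(1\ast f')(t)$ all match the cited analysis. One minor remark: your consistency bound $|\Psi_\tau(e^{-z\tau})-z^\alpha|\le c\tau|z|^{1+\alpha}$ is a weakened (but sufficient, since $\tau|z|$ is bounded on the truncated contour) form of the sharp $O(\tau^{2-\alpha}|z|^2)$ estimate, so, as you correctly identify, the genuinely first-order bottleneck is the prefactor mismatch $|\Phi_\tau(z)-z^{\alpha-1}|\le c\tau|z|^{\alpha}$.
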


Very recently, a corrected L1 scheme was developed  by Yan et al \cite{YanKhanFord:2018} (see
also \cite{XingYan:2018,FordYan:2017} for related works from the group). The corrected scheme is given by
\begin{equation}\label{eqn:fullyl1-corrected}
  \left\{\begin{aligned}
   L_1^1(U) -\Delta U^1 - \tfrac{1}{2}
 \Delta U^0 & =  f(t_1)+ \tfrac{1}{2}f(0), \quad n=1\\
   L_1^n(U) -\Delta U^n & =  f(t_n),\quad n\geq 2.
  \end{aligned}\right.
\end{equation}
It is noteworthy that it requires only correcting the first step, and incidentally,
the correction term is identical with that for BDF2 CQ.
Then the following error estimate holds for the corrected scheme.
Note that the stated regularity requirement on the source term $f$ may not
be optimal for $\alpha>1/2$.
\begin{theorem}\label{thm:error_fullyl1-corrected}
Let $u$ and $U^n$ be the solutions of problems \eqref{eqn:pde} and \eqref{eqn:fullyl1-corrected}, respectively. Then there holds
\begin{align*}
  \|u(t_n)-U^n\|_{L^2(\Omega)} &\le c \tau^{2-\alpha}\bigg(t_n^{(\beta+1)\alpha-2} \|(-\Delta)^\beta v\|_{L^2(\Omega)} + t_n^{2\alpha-2}\|f(0)\|_{L^2(\Omega)} + t_n^{2\alpha-1}\|f'(0)\|_{L^2(\Omega)}\\
&\quad +\int_0^{t_n}(t_n-s)^{2\alpha-1}\|f''(s)\|_{L^2(\Omega)}\d s\bigg).
\end{align*}
\end{theorem}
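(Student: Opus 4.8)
The plan is to follow the discrete Laplace transform (generating function) route that underlies Theorem~\ref{thm:error_fullyl1-nonsmooth}, and to show that the single correction in \eqref{eqn:fullyl1-corrected} removes the leading temporal error. First I would write the L1 operator as a convolution, $L_1^n(U)=\tau^{-\alpha}\sum_{j=0}^{n-1}b_j(U^{n-j}-U^{n-j-1})$, so that its generating function is governed by the transcendental symbol
\begin{equation*}
g_\tau(\zeta):=\frac{(1-\zeta)^2}{\tau^\alpha\,\zeta\,\Gamma(2-\alpha)}\,\mathrm{Li}_{\alpha-1}(\zeta),
\end{equation*}
with $\mathrm{Li}_s$ the polylogarithm. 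Multiplying \eqref{eqn:fullyl1-corrected} by $\zeta^n$, summing over $n$, imposing $U^0=v$ and solving the resulting spatial resolvent equation gives
\begin{equation*}
\sum_{n\ge0}U^n\zeta^n=(g_\tau(\zeta)-\Delta)^{-1}\Big[\widetilde F(\zeta)+\tfrac12(\Delta v+f(0))\zeta+\tfrac{g_\tau(\zeta)}{1-\zeta}\,v-\Delta v\Big],
\end{equation*}
where $\widetilde F(\zeta)=\sum_{n\ge1}f(t_n)\zeta^n$, the $-\Delta v$ term arises from enforcing $U^0=v$, and the $\tfrac12(\Delta v+f(0))\zeta$ term is precisely the contribution of the correction at $n=1$. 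Inverting with $\zeta=e^{-z\tau}$ turns this into a contour integral over a truncated contour $\Gamma_{\theta,\delta}^\tau$ (cut off at $|\Im z|\le\pi/\tau$), to be compared term by term with the representation \eqref{eqn:Sol-expr-u-const} of $u(t_n)$.

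Before estimating, I would establish the admissibility of the discrete symbol. Writing $\Lambda(z):=g_\tau(e^{-z\tau})$ for the discrete counterpart of $z^\alpha$, the small-argument asymptotics $\mathrm{Li}_{\alpha-1}(e^{-\mu})=\Gamma(2-\alpha)\mu^{\alpha-2}+\sum_{k\ge0}c_k\mu^{k}$ show that $\Lambda(z)\in\Sigma_{\alpha\theta'}$ and $c_0|z|^\alpha\le|\Lambda(z)|\le c_1|z|^\alpha$ along $\Gamma_{\theta,\delta}^\tau$, so that \eqref{eqn:resol} applies to $(\Lambda(z)-\Delta)^{-1}$ with $\Lambda(z)$ in the role of $z$. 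The same asymptotics give the \emph{symbol consistency} $\Lambda(z)-z^\alpha=O(\tau^{2-\alpha}|z|^{2})$, the order $\tau^{2-\alpha}$ rather than $\tau$ reflecting the cancellation of the first-order term in the algebraic prefactor, and the Bernoulli series furnishes the \emph{data expansion} $\tfrac{z\tau}{1-e^{-z\tau}}=1+\tfrac{z\tau}{2}+O((z\tau)^2)$; the $\tfrac{z\tau}{2}$ here is the origin of the $O(\tau)$ error of the uncorrected scheme and is exactly what the correction is designed to kill.

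With these in hand I would write $e^n=u(t_n)-U^n$ as a single contour integral of kernel differences, split into a $v$-part and an $f$-part. The cleanest route is to compare the two spectral multipliers directly: for an eigenvalue $\lambda$ of $-\Delta$, the exact $v$-kernel is $\tfrac{z^{\alpha-1}}{z^\alpha+\lambda}$, while the discrete one is $\tfrac{1}{\Lambda+\lambda}\big(\tfrac{\tau\Lambda}{1-e^{-z\tau}}+\tau\lambda-\tfrac\tau2\lambda e^{-z\tau}\big)$, the last two summands coming from the $-\Delta v$ term of the solved generating function and from the correction. Forming their difference as a single fraction---crucially, without expanding $\Lambda$ about $z^\alpha$ term by term---gives $\tfrac\tau2+\tfrac{\lambda(\Lambda-z^\alpha)}{z(\Lambda+\lambda)(z^\alpha+\lambda)}+(\text{higher order})$. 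The constant $\tfrac\tau2$ is where the correction does its job (it is exactly what turns the $O(\tau)$ part into a multiple of the identity) and integrates to zero since $\tfrac{1}{2\pi\mathrm{i}}\int_{\Gamma_{\theta,\delta}}e^{zt_n}\,\d z=0$ for $t_n>0$; in the genuine error term the discrepancy $\Lambda-z^\alpha=O(\tau^{2-\alpha}|z|^2)$ appears inseparably multiplied by $\lambda$, i.e.\ with a gain of one power of $-\Delta$, which lets the full $(-\Delta)^\beta$, $\beta\in[0,1]$, be absorbed. Together with \eqref{eqn:resol} this bounds the integrand by $c\,\tau^{2-\alpha}|z|^{1-(1+\beta)\alpha}\|(-\Delta)^\beta v\|_{L^2(\Omega)}$, and the scalar identity $\tfrac{1}{2\pi\mathrm{i}}\int_{\Gamma_{\theta,\delta}}e^{zt}|z|^{-\gamma}\,\d z\sim t^{\gamma-1}$ converts this into the weight $\tau^{2-\alpha}t_n^{(\beta+1)\alpha-2}$. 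The $f$-part is handled analogously: Taylor expanding $f$ about $t=0$ and reusing the same symbol and resolvent estimates yields the boundary contributions $t_n^{2\alpha-2}\|f(0)\|_{L^2(\Omega)}$ and $t_n^{2\alpha-1}\|f'(0)\|_{L^2(\Omega)}$, while the remainder is controlled by $\int_0^{t_n}(t_n-s)^{2\alpha-1}\|f''(s)\|_{L^2(\Omega)}\,\d s$.

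I expect the principal obstacle to be twofold, both rooted in the transcendental nature of the L1 symbol. First, unlike the rational symbols of BDF$k$ convolution quadrature, $g_\tau$ is assembled from the polylogarithm, so the admissibility bounds and the sharp consistency estimate $\Lambda(z)-z^\alpha=O(\tau^{2-\alpha}|z|^2)$ must be proved uniformly along the \emph{entire} truncated contour---not only where $|z\tau|\to0$ but also near the cut-off $|\Im z|\sim\pi/\tau$, where the series asymptotics fail and a separate estimate of the tail $\int_{|\Im z|=\pi/\tau}$ is needed; the same delicacy forbids the lossy term-by-term expansion of $\Lambda$ that would destroy the crucial $\lambda$-gain. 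Second, extracting the sharp time weights for the source term and matching them to the weakest possible assumption on $f''$ forces a careful splitting of the convolution; this is exactly the point at which, as the remark preceding the theorem cautions, the stated regularity on $f$ may be suboptimal for $\alpha>1/2$.
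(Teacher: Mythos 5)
The paper itself contains no proof of this theorem: being a survey, it quotes the result from Yan et al.\ \cite{YanKhanFord:2018} and only indicates, in the discussion surrounding Theorem \ref{thm:error_fullyl1-nonsmooth}, the technique used for such L1 estimates, namely discrete Laplace transform with the polylogarithmic symbol of the L1 weights. Your proposal follows exactly that route and its outline is sound: the generating-function identity for the corrected scheme \eqref{eqn:fullyl1-corrected}, the sectoriality and consistency bounds $\Lambda(z)-z^{\alpha}=O(\tau^{2-\alpha}|z|^{2})$ for the symbol, the observation that the single correction converts the $O(\tau)$ kernel defect into a $\lambda$-independent constant that is annihilated (up to an exponentially small remainder) by the contour integration, and the extra factor of $\lambda$ multiplying $\Lambda-z^{\alpha}$ that allows $(-\Delta)^{\beta}$, $\beta\in[0,1]$, to be absorbed, are precisely the ingredients of the proof in the cited reference.
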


There have been several important efforts in extending the L1 scheme to high-order schemes by
using high-order local polynomials \cite{LvXu:2016,GaoSunZhang:2014,MustaphaAbdallahFurati:2014} and superconvergent points
\cite{Alikhanov:2015}. For example, the L1-2 scheme due to Gao et al \cite{GaoSunZhang:2014} applies a piecewise linear approximation on the
first subinterval, and a quadratic approximation on the other subintervals to
improve the numerical accuracy. However, the performance of these methods for nonsmooth data is not fully understood.

Besides, Mustapha and McLean developed several discontinuous Galerkin methods \cite{McLeanMustapha:2009,MustaphaMcLean:2011,Mustapha:2015}
for a variant of the model \eqref{eqn:pde}:
\begin{equation*}
  \partial_t u - {^R\partial_t^{1-\alpha}}\Delta u = f,
\end{equation*}
with suitable boundary and initial conditions. Formally, this model can be derived by applying the Riemann-Liouville
operator $^R\partial_t^{1-\alpha}$ to both sides of the equation in \eqref{eqn:pde}. The resulting schemes are similar to piecewise
polynomial interpolation described above. However, the nonsmooth error estimates are mostly unavailable,
except for the piecewise constant discontinuous Galerkin method (for the homogeneous problem)
\cite{McLeanMustapha:2015,YangYanFord:2018}; see also \cite{GunzburgerWang:2018} for a Crank-Nicolson
type scheme for a related model.

\subsection{Illustrations and outstanding issues}

Now we illustrate the performance of the corrected time stepping schemes.
\begin{example}\label{exam:time-stepping}
Consider problem \eqref{eqn:pde} on $\Omega=(0,1)$ with $v=x\sin(2 \pi x) \in \dH 2$
and $f= 0$.
\end{example}

In Table  \ref{tab:v-nocorrect} we present the error $\|u_h(t_N) - u_h^N\|_{L^2(\Omega)}$ at $t_N=1$. The numerical results
show only a first-order empirical convergence rate, for all standard
BDF$k$ CQ, $k\geq2$, which shows clearly the lack of robustness of the naive CQ scheme \eqref{eqn:BDF-CQ-0} with
respect to problem data regularity, despite the good regularity of the initial data $v$. In sharp contrast,
the corrected scheme \eqref{eqn:BDF-CQ} can achieve the desired convergence rate; see Table \ref{tab:v-correct-smooth}.
These observations remain valid for the L1 scheme and its corrected version; see Tables
\ref{tab:uncorrect-smooth-L1} and \ref{tab:correct-smooth-L1}. It is worth noting that the desired rate for the corrected L1 scheme only
kicks in at a relatively small time step size, and its precise mechanism remains unclear. These results
show clearly the effectiveness of the idea of initial correction for restoring the desired high-order convergence.

\begin{table}[htb!]
\caption{The $L^2$-norm error for Example \ref{exam:time-stepping} at $t_N=1$, by
the scheme \eqref{eqn:BDF-CQ-0} with $h=1/100$.}\label{tab:v-nocorrect}
\vskip-10pt
\centering
     \begin{tabular}{|c|c|ccccc|c|}
     \hline
      $\alpha$ &  $N$  &$50$ &$100$ &$200$ & $400$ & $800$  &rate \\
      \hline
             & BDF2  &4.94e-3 &2.48e-3 &1.24e-3 &6.20e-4 &3.10e-4 &$\approx$ 1.00 ($1.00$)\\
             & BDF3  &4.99e-3 &2.49e-3 &1.24e-3 &6.21e-4 &3.11e-4 &$\approx$ 1.00 ($1.00$)\\
     $ 0.5$  & BDF4  &4.99e-3 &2.49e-3 &1.24e-3 &6.21e-4 &3.11e-4 &$\approx$ 1.00 ($1.00$)\\
             & BDF5  &4.99e-3 &2.49e-3 &1.24e-3 &6.21e-4 &3.11e-4 &$\approx$ 1.00 ($1.00$)\\
             & BDF6  &4.96e-3 &2.49e-3 &1.24e-3 &6.21e-4 &3.11e-4 &$\approx$ 1.00 ($1.00$)\\
      \hline
     \end{tabular}
\end{table}

\begin{table}[htb!]
\caption{The $L^2$-norm error for Example \ref{exam:time-stepping} at $t_N=1$, by the corrected scheme
\eqref{eqn:BDF-CQ} with $h=1/100$.}
\label{tab:v-correct-smooth}
\centering
     \begin{tabular}{|c|c|ccccc|c|}
     \hline
      $\alpha$ &  $k\backslash N$  &$50$   &$100$ &$200$ &$400$ & $800$   &rate \\
     \hline
             & 2  &5.87e-5 &1.45e-5 &3.59e-6 &8.95e-7 &2.23e-7 &$\approx$ 2.00 (2.00)\\
             & 3  &2.39e-6 &2.88e-7 &3.53e-8 &4.38e-9 &5.45e-10 &$\approx$ 3.00 (3.00)\\
    $ 0.25$  & 4  &1.49e-7 &8.72e-9 &5.27e-10 &3.24e-11 &2.01e-12  &$\approx$ 4.02 (4.00)\\
             & 5  &1.33e-8 &3.57e-10 &1.06e-11 &3.22e-13 &9.91e-15 &$\approx$ 5.02 (5.00)\\
             & 6  &1.12e-5 &1.54e-9 &2.68e-13 &4.02e-15 &6.16e-17 &$\approx$ 6.04 (6.00)\\
      \hline
             & 2  &1.77e-4 &4.34e-5 &1.08e-5 &2.68e-6 &6.69e-7 &$\approx$ 2.00 (2.00)\\
             & 3  &7.85e-6 &9.44e-7 &1.16e-7 &1.43e-8 &1.78e-9 &$\approx$ 3.01 (3.00)\\
    $ 0.5$   & 4  &5.23e-7 &3.04e-8 &1.83e-9 &1.12e-10 &6.97e-12 &$\approx$ 4.02 (4.00)\\
             & 5  &4.86e-8 &1.30e-9 &3.85e-11 &1.17e-12 &3.60e-14 &$\approx$ 5.03 (5.00)\\
             & 6  &2.82e-5 &2.99e-9 &1.01e-12 &1.51e-14 &2.32e-16 &$\approx$ 6.05 (6.00)\\
      \hline
             & 2  &4.58e-4 &1.12e-4 &2.78e-5 &6.92e-6 &1.73e-6 &$\approx$ 2.00 (2.00)\\
             & 3  &2.39e-5 &2.85e-6 &3.49e-7 &4.31e-8 &5.36e-9 &$\approx$ 3.01 (3.00)\\
    $ 0.75$  & 4  &1.80e-6 &1.04e-7 &6.22e-9 &3.81e-10 &2.36e-11 &$\approx$ 4.02 (4.00)\\
             & 5  &2.51e-7 &4.90e-9 &1.44e-10 &4.35e-12 &1.34e-13 &$\approx$ 5.03 (5.00)\\
             & 6  &1.65e-3 &4.20e-7 &4.17e-12 &6.10e-14 &9.31e-16 &$\approx$ 6.06 (6.00)\\
      \hline
     \end{tabular}
\end{table}

\begin{table}[htb!]
\caption{The $L^2$-norm error for Example \ref{exam:time-stepping} at $t_N=1$, by the L1 scheme
\eqref{eqn:BDF-CQ} with $h=1/100$.}
\label{tab:uncorrect-smooth-L1}
\centering
     \begin{tabular}{|c|ccccc|c|}
     \hline
      $\alpha \backslash N$  &$50$   &$100$ &$200$ &$400$ & $800$   &rate \\
     \hline
      $0.3$         &2.40e-3 &1.19e-3 &5.96e-4 &2.98e-4 &1.49e-4 &$\approx$ 1.01 (1.00)\\
      \hline
       $0.5$         &5.09e-3 &2.52e-3 &1.25e-3 &6.25e-4 &3.12e-4 &$\approx$ 1.02 (1.00)\\
      \hline
       $0.7$        &9.04e-3  &4.42e-3 &2.18e-3 &1.08e-3 &5.33e-4 &$\approx$ 1.01 (1.00)\\
      \hline
     \end{tabular}
\end{table}

\begin{table}[htb!]
\caption{The $L^2$-norm error for Example \ref{exam:time-stepping} at $t_N=0.01$, corrected L1 scheme,
 $h=1/100$ and $N=1000\times 2^{k}$.}
\label{tab:correct-smooth-L1}
\centering
     \begin{tabular}{|c|cccccc|c|}
     \hline
      $\alpha \backslash k$  &$1$   &$2$ &$3$ &$4$ & $5$ & $6$  &rate \\
     \hline
      $0.3$         &7.94e-8 &2.79e-8 &9.67e-9 &3.28e-9 &1.09e-9 &3.56e-10 &$\approx$ 1.63 (1.70)\\
      \hline
       $0.5$        &1.90e-6 &6.93e-7 &2.50e-7 &8.95e-8 &3.19e-8 &1.14e-8  &$\approx$ 1.49 (1.50)\\
      \hline
       $0.7$        &1.97e-5 &8.06e-6 &3.29e-6 &1.34e-6 &5.44e-7 &2.21e-7  &$\approx$ 1.30 (1.30)\\
      \hline
     \end{tabular}
\end{table}

We conclude this section with two research directions on time stepping schemes that need/deserve further investigation.
\begin{itemize}
\item[(i)] Nonsmooth error analysis for time-stepping schemes is still in its infancy. So far all
known results are only for uniform grids, and all the proofs rely essentially on Laplace transform.
It is of immense interest to develop energy type arguments that yield nonsmooth data error estimates,
which might allow deriving results for nonuniform grids. Likewise, correction schemes are also only developed for uniform grids. This is partially due
to the fact that the current construction of corrections essentially relies on Laplace transform of the kernel and its discrete analogue.
\item[(ii)] The error estimates are only derived for problems with a time-independent elliptic operator, and there
are no analogous results for time-dependent elliptic operators, including time-dependent coefficient
and certain nonlinear problems.
\end{itemize}

\section{Space-time formulations}\label{sec:space-time}
Due to the nonlocality of the fractional derivative $\partial_t^\alpha u$, at each time step
one has to use the numerical solutions at all preceding time levels. Thus, the advantages of
time stepping schemes, when compared to space-time schemes, are not as pronounced as in the case of
standard parabolic problems, and it is natural to consider space-time discretization.
Naturally, any such construction would rely on a proper variational formulation of the fractional derivative,
which is only well understood for the Riemann-Liouville derivative $^R\partial_t^\alpha u$ at
present. Thus, the idea so far is mostly restricted to problem \eqref{eqn:pde} with $v=0$, for
which the Riemann-Liouville and Caputo derivatives coincide, and we shall not distinguish the
two fractional derivatives in this section.
Throughout, let $I=(0,T)$, and the space $\widetilde{H}_L^s(I) $ consists of functions whose
extension by zero belong to $H^{s}(-\infty,T)$. On the cylindrical domain $Q_T=\Omega\times I$,
we denote the  $L^2(Q_T)$-inner product by $(\cdot,\cdot)_{L^2(Q_T)}$.

\subsection{Standard Galerkin formulation}\label{ssec:space-time-Galerkin}
In an influential work, Li and Xu \cite{LiXu:2009} proposed a first rigorous space-time formulation
for problem \eqref{eqn:pde}, which was extended and refined by many other researchers (see, e.g.,
\cite{ZayernouriAinsworth:2015,HouHasanXu:2018} and the references therein). For any $s\in[0,1]$,
we denote by
$$B^s(Q_T)=H^s(I;L^2(\Omega))\cap L^2(I;H_0^1(\Omega)),$$
with a norm defined by
\begin{equation*}
\|v\|_{B^s(Q_T)}^2=\|v\|_{H^s(I;L^2(\Omega))}^2+\|v|_{L^2(I;H^1(\Omega))}^2.
\end{equation*}
The foundation of the method is the following important identity \cite[Lemma 2.6]{LiXu:2009}
\begin{equation}\label{eqn:int-part}
  ({\DDR0{\alpha}}w,v)_{L^2(I)} = ({\DDR0{\frac{\alpha}{2}}}w,{\DDR1{\frac{\alpha}{2}}}v)_{L^2(I)}\quad \forall w\in \widetilde H_L^1(I),v\in \widetilde H_L^\frac{\alpha}{2}(I),
\end{equation}
where $\DDR0\gamma w$ and $\DDR1\gamma w$ denote the left-sided and right-sided Riemann-Liouville
fractional derivatives, respectively, and for $\gamma\in(0,1)$, and are defined by
\begin{align*}
  {\DDR0\gamma w}(t) &= \frac{\d}{\d t}\frac{1}{\Gamma(1-\gamma)}\int_0^t(t-s)^{-\gamma}w(s)\d s, \\
  {\DDR1\gamma w}(t) &= -\frac{\d}{\d t}\frac{1}{\Gamma(1-\gamma)}\int_t^T(s-t)^{-\gamma}w(s)\d s.
\end{align*}
By multiplying both sides of problem \eqref{eqn:pde} with $v\in B^\frac{\alpha}{2}(Q_T)$, integrating
over the cylindrical domain $Q_T$, applying the formula \eqref{eqn:int-part} in time and integration by parts
in space, we obtain the following bilinear form on the space $B^\frac{\alpha}{2}(Q_T)$:
\begin{equation*}
  a(u,v) = ({\DDR0{\frac{\alpha}{2}}}u,{\DDR1{\frac{\alpha}{2}}}v)_{L^2(Q_T)} + (\nabla u,\nabla v)_{L^2(Q_T)}.
\end{equation*}
Hence, the weak formulation of problem \eqref{eqn:pde} is given by: for $f\in L^2(Q_T)$,
find $u\in B^\frac{\alpha}{2}(Q_T)$ such that
\begin{equation}\label{eqn:weak-LiXu}
  a(u,v) = (f,v)_{L^2(Q_T)}\quad \forall v\in B^\frac{\alpha}{2}(Q_T).
\end{equation}
Clearly, the bilinear form $a(\cdot,\cdot)$ is not symmetric, since the Riemann-Liouville derivatives
$\DDR0\gamma u(t)$ and $\DDR1\gamma u(t)$ differ. Nonetheless, it is continuous on the space $B^\frac\alpha2(Q_T)$.
Further, since the inner product $({\DDR0{\frac{\alpha}{2}}}v,\ {\DDR1{\frac{\alpha}{2}}}v)_{L^2(I)}$
involving Riemann-Liouville derivatives actually induces an equivalent norm on the space
$H^\frac{\alpha}{2}(I)$ (e.g., by means of Fourier transform) (see, e.g., \cite[Lemma 2.5]{LiXu:2009}
and \cite[Lemma 4.2]{JinLazarovPasciakRundell:2015}):
\begin{equation*}
  ({\DDR0{\frac{\alpha}{2}}}v,\ {\DDR1{\frac{\alpha}{2}}}v)_{L^2(I)}\geq \|v\|_{H^\frac{\alpha}{2}(I)}^2,
\end{equation*}
we have the following coercivity of the bilinear form $a(\cdot,\cdot)$
\begin{equation*}
  a(u,u) \geq c\|u\|_{B^\frac\alpha2(Q_T)}^2.
\end{equation*}
Then the well-posedness of the weak formulation \eqref{eqn:weak-LiXu} follows directly from Lax-Milgram theorem.

To discretize the weak formulation, Li and Xu \cite{LiXu:2009} employed a spectral approximation for the case
of one-dimensional spatial domain $\Omega$. Specifically, let $P_N(I)$
(respectively $P_M(\Omega)$) be the polynomial space of degree less than or equal to $N$ (respectively $M$)
with respect to $t$ (respectively $x$). For the spectral approximation in space, the authors employ the space
$P_M^0(\Omega):=P_M(\Omega)\cap H_0^1(\Omega)$, and since $v=0$, it is natural to construct the approximation
space (in time):
\begin{equation*}
  P_N^E(I) :=\{v\in P_N(I): v(0)=0\}.
\end{equation*}
Then for a given pair of integers $M,N$, let $L:=(M,N)$ and $S_L:=P_M^0(\Omega)\otimes P_N^E(I)\subset B^\frac{\alpha}{2}(Q_T)$. The
space-time spectral Galerkin approximation to problem \eqref{eqn:pde} reads: find $u_L\in S_L$ such that
\begin{equation*}
   a(u_L,v_L) = (f,v_L)_{L^2(Q_T)}\quad \forall v_L\in S_L.
\end{equation*}
The well-posedness of the discrete problem follows directly from Lax-Milgram theorem. The authors also provided
optimal error estimates in the energy norm. However, the $L^2(Q_T)$ error estimate for the approximation
remains unclear, since the regularity of the adjoint problem is not well understood. Clearly the construction
extends directly to rectangular domains.

Note that in order to achieve high-order convergence, the standard polynomial approximation space requires high regularity of the solution $u$ in time,
which is nontrivial to ensure a priori, in view of the limited smoothing property of the solution
operators. Hence, recently, there have been immense interest in developing schemes that can take care of the
solution singularity directly. In the context of space-time formulations, singularity enriched
trial and/or test spaces, e.g., generalized Jacobi polynomials \cite{ChenShenWang:2016} (including
 Jacobi poly-fractonomials \cite{ZayernouriKarniadakis:2013}) and M\"untz polynomials \cite{HouHasanXu:2018},
are extremely promising and have demonstrated very encouraging numerical results. However, the
rigorous convergence analysis of such schemes can be very challenging, and is mostly missing
for nonsmooth problem data.

\subsection{Petrov-Galerkin formulation}Now we introduce a Petrov-Galerkin
formulation recently developed in \cite{DuanJinLazarovPasciakZhou:2018}.
Let $V(Q_T)=L^2(I;H_0^1(\Omega))$ and by $V^*(Q_T)$ its dual, and
for any $0<s<1$, define the space $B^s(Q_T)$ by
\begin{equation*}
   B^s(Q_T)=\widetilde{H}_L^s(I; H^{-1}(\Omega)) \cap L^2(I;H_0^{1}(\Omega)).
\end{equation*}
The space is endowed with the norm
\begin{equation*}
 \|v\|^2_{B^s(Q_T)} = \| {\partial_t^s} v \|^2_{V^*(Q_T)} + (\nabla v,\nabla v)_{L^2(Q_T)}.
\end{equation*}
Here we have slightly abused the notation $B^s(Q_T)$ since it differs from that in Section \ref{ssec:space-time-Galerkin}.
Then we define the bilinear form $a(\cdot,\cdot):B^\alpha(Q_T) \times V(Q_T) \to \mathbb{R}$ by
\begin{equation*}
a(v, \phi) := ({\partial_t^\alpha} v, \phi)_{L^2(Q_T)} + ( \nabla u, \nabla v )_{L^2(Q_T)}.
\end{equation*}
The Petrov-Galerkin weak formulation of problem \eqref{eqn:pde} reads: find $ u \in {B}^\alpha(Q_T)$  such that
\begin{equation}\label{eqn:BV-weak}
 	a(u, \phi) = ( f, \phi )_{L^2(Q_T)} \quad \forall \phi \in V(Q_T).
\end{equation}
The bilinear form $a(\cdot,\cdot)$ is continuous on $B^\alpha(Q_T) \times V(Q_T)$, and
 it satisfies the following inf-sup condition
\begin{equation*}
	\sup_{\phi \in V(Q_T)} \frac{a(v,\phi) }{\|\phi \|_{V(Q_T)}} \ge \|v \|_{B^\alpha(Q_T)}\quad \forall v\in B^\alpha(Q_T)
\end{equation*}
and a compatibility condition, i.e., $\sup_{v \in B^\alpha (Q_T)} a(v,\phi) >0$ for any $0\neq \phi\in V(Q_T)$
\cite[Lemma 2.4]{DuanJinLazarovPasciakZhou:2018}.
Thus the well-posedness of the space-time formulation follows directly from the Babuska-Brezzi theory.

Now the development of a novel Petrov-Galerkin method is based on the following idea.
Let ${X}_h$ be the space of continuous piecewise linear
functions on a quasi-uniform shape regular triangulation $\mathcal{T}_h$ of the domain $\Omega$. Also, take a
uniform partition of the time interval $I$ with grid points $t_n=n \tau$, $n=0,\ldots,N$, and time
step-size $\tau=T/N$. Following \cite{JinLazarovZhou:2016sinum},
define a set of ``fractionalized'' piecewise constant basis functions $ \phi_n(t)$, $n =1,\ldots,N$, by
\begin{equation*}
\phi_n(t) = (t - t_{n-1})^{\alpha}\chi_{[t_{n-1}, T]}(t),
\end{equation*}
where $\chi_S$ denotes the characteristic function of the set $S$. It is
easy to verify that
$$
\phi_n(t)=\Gamma(\alpha +1){_0I_t^{\alpha}}\chi_{[t_{n-1}, T]}(t) \quad \mbox{ and } \quad
\partial_t^\alpha \phi_k (t) =\Gamma(\alpha +1) \chi_{[t_{n-1}, T]}(t).
$$
Clearly, $\phi_k \in \widetilde H_L^{\alpha+s}(0,T)$ for any $s\in[0,1/2)$.

Further, we introduce the following two spaces
\begin{equation*}
V_\tau= \text{span}(\{\phi_n(t) \}_{n=1}^N)
\quad\mbox{and}\quad W_\tau := \text{span}(\{ \chi_{[t_{n-1}, T]}(t)  \}_{n=1}^N).
\end{equation*}
Then the solution space ${B}_{h,\tau}^\alpha \subset B^\alpha(Q_T)$ and the test space ${V}_{h,\tau}(Q_T)
\subset  V(Q_T)$ are respectively defined by ${B}_{h,\tau}^\alpha (Q_T):= X_h \otimes {V}_\tau$
and ${V}_{h,\tau}(Q_T) := {X}_h \otimes {W}_\tau$. The space-time Petrov-Galerkin FEM problem of
\eqref{eqn:pde} reads: given $f\in V^*(Q_T)$, find $u_{h\tau} \in {B}_{h,\tau}^\alpha(Q_T)$ such that
\begin{equation}\label{eqn:BV-weak-h}
 a(u_{h \tau}, \phi)   = ( f, \phi )_{L^2(Q_T)} \quad \forall \phi \in {V}_{h,\tau}(Q_T).
\end{equation}
Algorithmically, it leads to a time-stepping like scheme, and thus admits an efficient practical implementation.
The existence and the stability of the solution  $u_{h \tau}$ follows from the discrete inf-sup condition
\cite[Lemma 3.3]{DuanJinLazarovPasciakZhou:2018}
\begin{equation*}
 \sup_{\phi \in{V}_{h,\tau}(Q_T)} \frac{a(v,\phi) }{\|\phi \|_{V(Q_T)}} \ge c_\alpha\|v\|_{B^\alpha(Q_T)}
               \quad \forall   v \in {B}_{h,\tau}^\alpha(Q_T).
\end{equation*}
This condition was shown using the $L^2(I)$ stability of the projection operator from ${V}_\tau$
to ${W}_\tau$. It is interesting to note that the constant in the $L^2(I)$-stability of  the operator
depends on the fractional order $\alpha$ and deteriorates as $\alpha \to 1$. Note that for standard parabolic
problems ($\alpha=1$), it depends on the time step size $\tau$, leading to an undesirable CFL-condition, a
fact shown in \cite{LarssonMolteni:2017}. This indicates one significant difference between the fractional model
and the standard parabolic model in the context of space-time formulations. In passing, we also note a different Petrov-Galerkin
formulation proposed very recently in \cite{Karkulik:2018}, whose numerical realization, however, has not
been carried out yet and needs computational verification.

Next, we give two error estimates for the space-time Petrov-Galerkin approximation $u_{h\tau}$,
\cite[Theorems 5.2 and 5.3]{DuanJinLazarovPasciakZhou:2018}, in $ H_L^{s}(0,T;L^2(\Omega ))$- and $L^2(Q_T)$-norms, respectively.
\begin{theorem}\label{thm:err-energy}
Let $f \in \widetilde H_L^{s}(0,T;L^2(\Omega ))$ with $0 \le s \le 1$, and $u$ and
$u_{h \tau}$ be the solutions of \eqref{eqn:BV-weak} and
\eqref{eqn:BV-weak-h}, respectively. Then there holds
\begin{align*}
	\| u-u_{h \tau}\|_{B^\alpha(Q_T)}&\le c ( \tau^s+ h)\|f\|_{\widetilde{H}_L^s( {0,T;L^2 (\Omega )})},\\
  	\| u-u_{h \tau} \|_{{L^2}({Q_T})}&\le c ({\tau ^{\alpha+s}} +  h^2 )\| f\|_{\widetilde{H} _L^s(0,T;L^2(\Omega))}.
\end{align*}
\end{theorem}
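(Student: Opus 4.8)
The plan is to derive both estimates from the discrete inf-sup condition quoted above through a Petrov--Galerkin quasi-optimality argument, thereby reducing the error to a best-approximation error in the tensor-product trial space $B_{h,\tau}^\alpha(Q_T)=X_h\otimes V_\tau$, which I then bound separately in space and in time.

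For the energy-norm estimate, I would first combine the discrete inf-sup condition with the continuity of $a(\cdot,\cdot)$ on $B^\alpha(Q_T)\times V(Q_T)$ to obtain the quasi-optimality bound
\[
\|u-u_{h\tau}\|_{B^\alpha(Q_T)}\le c\inf_{v\in B_{h,\tau}^\alpha(Q_T)}\|u-v\|_{B^\alpha(Q_T)}.
\]
The natural competitor is a tensor-product interpolant $R_h\otimes\Pi_\tau$ assembled from the spatial Ritz projection $R_h$ and a temporal projection $\Pi_\tau$ onto $V_\tau$. Recalling that $\|v\|_{B^\alpha(Q_T)}^2=\|\partial_t^\alpha v\|_{V^*(Q_T)}^2+\|\nabla v\|_{L^2(Q_T)}^2$, the spatial contribution is controlled by the $O(h)$ approximation property of $R_h$ in the $H_0^1(\Omega)$-norm. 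For the temporal part the decisive structural fact is $\partial_t^\alpha\phi_n=\Gamma(\alpha+1)\chi_{[t_{n-1},T]}$, so every $v\in V_\tau$ has a \emph{piecewise constant} Caputo derivative on the grid; consequently estimating $\|\partial_t^\alpha(u-\Pi_\tau u)\|_{V^*(Q_T)}$ amounts to approximating $\partial_t^\alpha u$ by piecewise constants in time, which yields the rate $\tau^s$ provided $\partial_t^\alpha u$ enjoys $\widetilde{H}_L^s$-regularity in time. This temporal regularity I would extract from the identity $\partial_t^\alpha u=f+\Delta u$ together with the hypothesis $f\in\widetilde{H}_L^s(0,T;L^2(\Omega))$ and the maximal-regularity bound of Theorem \ref{thm:reg-u}(ii).

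For the $L^2(Q_T)$-estimate, I would run an Aubin--Nitsche duality argument. Given $g\in L^2(Q_T)$, I introduce the adjoint problem of finding $w\in V(Q_T)$ with $a(v,w)=(v,g)_{L^2(Q_T)}$ for all $v\in B^\alpha(Q_T)$; integrating by parts in time recasts it as a \emph{backward} fractional problem driven by the right-sided Riemann--Liouville derivative with a vanishing terminal condition. Galerkin orthogonality $a(u-u_{h\tau},\phi)=0$ for all $\phi\in V_{h,\tau}(Q_T)$ then gives $(u-u_{h\tau},g)_{L^2(Q_T)}=a(u-u_{h\tau},w-\phi)$ for every $\phi\in V_{h,\tau}(Q_T)$, which I bound by $c\,\|u-u_{h\tau}\|_{B^\alpha(Q_T)}\,\|w-\phi\|_{V(Q_T)}$. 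Treating the spatial and temporal contributions separately, the smoothing of the backward adjoint problem supplies $w\in L^2(I;\dot H^2(\Omega))$, so spatial duality upgrades the rate from $h$ to $h^2$, while the $\alpha$-order temporal smoothing of $w$ upgrades the temporal rate from $\tau^s$ to $\tau^{\alpha+s}$; combining these with the energy estimate produces the stated bound $\tau^{\alpha+s}+h^2$.

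I expect the principal obstacle to lie in the temporal analysis within the nonstandard space $V_\tau$: quantifying precisely how the $\widetilde{H}_L^s$-regularity of $f$ (and, in the duality step, of the adjoint datum) transfers to $\partial_t^\alpha u$, and then matching it with the piecewise-constant approximation power of $\partial_t^\alpha V_\tau$. Controlling $\|\partial_t^\alpha(u-\Pi_\tau u)\|_{V^*(Q_T)}$ sharply near $t=0$, where $u$ carries the weak initial singularity recorded in Theorem \ref{thm:reg-u}, is the delicate point; by contrast the spatial estimates are routine, resting only on the $O(h)$ and $O(h^2)$ properties of $R_h$ and the standard Aubin--Nitsche improvement in space.
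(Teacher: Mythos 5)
The survey itself contains no proof of this theorem: it is quoted verbatim from \cite[Theorems 5.2 and 5.3]{DuanJinLazarovPasciakZhou:2018}, so your proposal must be measured against the argument there. Your energy-norm outline is reasonable as far as it goes: Galerkin orthogonality holds, so the discrete inf-sup condition plus continuity of $a(\cdot,\cdot)$ on $B^\alpha(Q_T)\times V(Q_T)$ does give Babuska-type quasi-optimality, and the observation that $\partial_t^\alpha$ maps $V_\tau$ onto $W_\tau$ correctly reduces the $\|\partial_t^\alpha(u-v)\|_{V^*(Q_T)}$ component of the best-approximation error to piecewise-constant approximation of $\partial_t^\alpha u$. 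But two points are glossed over. First, the regularity you need is not Theorem \ref{thm:reg-u}(ii) (maximal $L^p$-regularity) but the shift property $f\in\widetilde H_L^s(0,T;L^2(\Omega))\Rightarrow u\in\widetilde H_L^{\alpha+s}(0,T;L^2(\Omega))\cap\widetilde H_L^{s}(0,T;\dot H^2(\Omega))$, which is a separate lemma (proved in the reference by Fourier/Laplace methods). Second, the competitor forced on you by the time-derivative component, namely $\Pi_\tau=\partial_t^{-\alpha}\pi_\tau\partial_t^\alpha$ with $\pi_\tau$ the $L^2(I)$-projection onto $W_\tau$, must simultaneously be accurate in the gradient component of the $B^\alpha(Q_T)$-norm; there, $\|\nabla(u-\Pi_\tau u)\|_{L^2(Q_T)}$ requires $\nabla u\in\widetilde H_L^{\alpha+s}(I;L^2(\Omega))$ to deliver $O(\tau^s)$, while the shift property plus interpolation give only $\nabla u\in\widetilde H_L^{s+\alpha/2}(I;L^2(\Omega))$, i.e.\ rate $\tau^{s-\alpha/2}$. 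So even the first estimate does not follow from pure best approximation with this interpolant.

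The genuine gap is in the $L^2(Q_T)$ bound. Any global space-time Aubin--Nitsche argument of the kind you describe ends with a product $c\,\|u-u_{h\tau}\|_{B^\alpha(Q_T)}\cdot\inf_\phi\|w-\phi\|_{V(Q_T)}\le c(\tau^s+h)(\tau^\beta+h)$, and the cross terms $h\tau^s$ and $h\tau^\beta$ cannot be absorbed into $\tau^{\alpha+s}+h^2$: even granting the optimistic adjoint rate $\beta=\alpha$, taking $h\simeq\tau^{(\alpha+s)/2}$ shows that $h\tau^s\lesssim\tau^{\alpha+s}$ forces $s\ge\alpha$ while $h\tau^\alpha\lesssim\tau^{\alpha+s}$ forces $s\le\alpha$, so the product bound reproduces the theorem only in the single case $s=\alpha$. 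Moreover $\beta=\alpha$ is itself unattainable: for adjoint data $g\in L^2(Q_T)$ the backward solution has only the mixed regularity $w\in\widetilde H_R^{\alpha}(I;L^2(\Omega))\cap L^2(I;\dot H^2(\Omega))$, hence $w\in H^{\alpha/2}(I;\dot H^1(\Omega))$ by interpolation; since $\|w-\phi\|_{V(Q_T)}$ is an $L^2(I;H^1(\Omega))$-norm and the test space is piecewise constant in time, the attainable adjoint rate is $\tau^{\alpha/2}$, not $\tau^\alpha$. Both defects are removed only by decoupling the two discretizations, which is essentially what the proof in \cite{DuanJinLazarovPasciakZhou:2018} does: the error is split through a semidiscrete solution, $u-u_{h\tau}=(u-u_h)+(u_h-u_{h\tau})$; the spatial part is bounded by $ch^2\|f\|_{L^2(Q_T)}$ via spatial duality and maximal $L^2$-regularity, while the temporal part involves no spatial approximation, so the time-only duality (equivalently, Fourier-symbol estimates in the $\widetilde H_L^s$ scale) can be carried out in $L^2(I;L^2(\Omega))$-type norms, where the backward problem does possess full $\alpha$-order temporal regularity; this yields $\tau^{\alpha+s}$ cleanly, with no cross terms and no mesh-coupling condition. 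Without this decoupling step your argument cannot reach the stated rates.
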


\subsection{Numerical illustrations, comments and research questions}
Now we present some numerical results to show the performance of the space-time Petrov-Galerkin FEM.
\begin{example}\label{exam:space-time-1}
Consider problem \eqref{eqn:pde} on the unit square domain $\Omega=(0,1)^2$ with $v\equiv 0$ and
\begin{itemize}
\item[(a)] $f= (e^t-1)x(1-x)y(1-y)\in \widetilde{H}_L^1( {0,T;L^2 (\Omega )})$;
\item[(b)] $f= t^{-0.2}x(1-x)y(1-y)\in\widetilde{H}_L^s( {0,T;L^2 (\Omega )})$, for any $s<0.3$.
\end{itemize}
The error $\|u_h - u_{h \tau}\|_{{L^2}(Q_T)}$ with $T=1$ is presented in Table \ref{tab:space-time-1}.
Due to the compatibility of the data, we observe an error in  ${L^2}(Q_T)$-norm of order
$O(\tau^{\alpha+1})$ and $O(\tau^{\alpha+0.3})$ for cases {\rm(}a{\rm)} and {\rm(}b{\rm)}, respectively,
which fully supports the theoretical results in Theorem \ref{thm:err-energy}.
\end{example}

\begin{table}[htb!]
\caption{The ${L^2}(Q_T)$-norm error for Example \ref{exam:space-time-1}, space-time PGFEM scheme with $\tau=T/N$ and $h=1/200$.}\label{tab:space-time-1}
\vskip-10pt
\centering
     \begin{tabular}{|c|c|ccccc|c|}
     \hline
      &$\alpha\backslash N$   &$20$ &$40$ & $80$ & $160$ & $320$ &rate \\
      \hline
              & 0.3  &1.02e-5 &4.16e-6 &1.70e-6 &7.03e-7 &2.96e-7 &$\approx$ 1.26 ($1.30$)\\
       $(a)$  & 0.5  &4.30e-6 &1.53e-6 &5.51e-7 &2.02e-7 &7.73e-8 &$\approx$ 1.45 ($1.50$)\\
              & 0.7  &2.05e-6 &6.43e-7 &1.98e-7 &6.17e-8 &2.00e-8 &$\approx$ 1.63 ($1.70$)\\
              \hline
              & 0.3  &3.20e-5 &2.50e-5 &1.93e-4 &1.48e-4 &1.13e-4 &$\approx$ 0.40 ($0.60$)\\
       $(b)$  & 0.5  &2.93e-4 &1.99e-4 &1.31e-4 &8.37e-5 &5.24e-5 &$\approx$ 0.67 ($0.80$)\\
              & 0.7  &2.15e-4 &1.14e-4 &5.64e-5 &2.72e-5 &1.31e-5 &$\approx$ 1.05 ($1.00$)\\
      \hline
     \end{tabular}
\end{table}

We conclude this sections with two important research problems on space-time formulations.
\begin{itemize}
  \item[(i)] The development of space-time formulations relies crucially on proper variational formulations
  for the fractional derivative, and this is relatively well understood for the Riemann-Liouville fractional derivative
  but not yet for the Caputo one. This is largely the main reason for the restriction to the case of a
  zero initial data. It is of much interest to develop techniques for handling nonzero initial data in the Caputo case,
  especially nonsmooth initial data.
  \item[(ii)] Nonpolynomial type approximation spaces for trial and test lead to interesting new schemes, supported by extremely promising
numerical results. However, the performance may depend strongly on the exponent of the fractional powers, and it would be of
much interest to develop strategies to adapt the algorithmic parameter automatically. Many theoretical questions surrounding such
schemes, of either Galerkin or Petrov-Galerkin type, are largely open.
\end{itemize}

\section{Concluding remarks}\label{sec:conclus}
In this paper, we have concisely surveyed relevant results on the topic of numerical methods of the
subdiffusion problem with nonsmooth problem data, with a focus on the state of the art of the following aspects:
regularity theory, finite element discretization, time-stepping schemes and space-time formulations.
We compared the theoretical results with that for standard parabolic problems, and provided illustrative
numerical results.  We also outlined a few interesting research problems that would lead to further developments and theoretical
understanding, and pointed out the most relevant references. Thus, it may serve as a brief introduction to this fast
growing area of numerical analysis.

The subdiffusion model represents one of the simplest models in the zoology of fractional diffusion or
anomalous diffusion. The authors believe that many of the analysis may be extended to more complex ones,
e.g., diffusion wave model, multi-term, distributed-order model, tempered subdiffusion,
nonsingular Caputo-Fabrizio fractional derivative, and space-time fractional models. However, these complex models have
scarcely been studied in the context of nonsmooth problem data, and their distinct features remain
largely to be explored both analytically and numerically.


\bibliographystyle{abbrv}
\bibliography{frac}
\end{document}